\newtheorem{fed}{Definition}[section]
\newtheorem*{fed*}{Definition}
\newtheorem*{feds*}{Definitions}
\newtheorem{teo}[fed]{Theorem}
\newtheorem*{teo*}{Theorem}
\newtheorem{lem}[fed]{Lemma}
\newtheorem{pro}[fed]{Proposition}
\theoremstyle{definition}
\newtheorem{rem}[fed]{Remark}
\newtheorem*{rems*}{Remarks}
\newtheorem{exa}[fed]{Example}
\newtheorem{nota}[fed]{Notation}
\def\coma{\, , \, }
\def\py{\peso{and}}
\newcommand{\peso}[1]{ \quad \text{ #1 } \quad }
\def\n0{n_{ \text{\rm \tiny o}}}
\def\suml{\sum\limits}
\def\bce{\begin{center}}
	\def\ece{\end{center}}
\def\py{\peso{and}}
\def\noi{\noindent}
\def\cF{\mathcal F}
\def\cG{\mathcal G}
\def\QED{\hfill $\square$}
\def\EOE{\hfill $\triangle$}
\def\uno{\mathds{1}}
\def\bm{\left[\begin{array}}
	\def\em{\end{array}\right]}
\def\ben{\begin{enumerate}}
	\def\een{\end{enumerate}}
\def\bit{\begin{itemize}}
	\def\eit{\end{itemize}}
\def\barr{\begin{array}}
	\def\earr{\end{array}}
\def\igdef{\ \stackrel{\mbox{\tiny{def}}}{=}\ }
\def\la{\lambda}
\def\N{\mathbb{N}}
\def\R{\mathbb{R}}
\def\C{\mathbb{C}}
\def\I{\mathbb{I}}
\def\cI{\mathcal{I}}
\def\cH{\mathcal{H}}
\def\cS{{\cal S}}
\def\cT{{\cal T}}
\def\cM{{\cal M}}
\def\cB{{\cal B}}
\def\cV{{\cal V}}
\def\cU{{\cal U}}
\def\ese{\mathcal{S}}
\def\cX{\mathcal{X}}
\def\cY{\mathcal{Y}}
\def\cZ{\mathcal{Z}}
\def\orto{^\perp}
\def\inc{\subseteq}
\def\ua{^\uparrow}
\def\da{^\downarrow}
\def\spr{\text{\rm Spr}}
\def\sprm{\text{\rm Spr}^+}
 \DeclareMathOperator{\tr}{tr}
\DeclareMathOperator{\leqp}{\leqslant}
\def\H{{\cal H}}
\newcommand{\mat}{\mathcal{M}_d(\mathbb{C})}
\newcommand{\matsad}{\mathcal{H}(d)}
\newcommand{\matud}{\mathcal{U}(d)}
\newcommand{\matpos}{\mat^+}
\newcommand{\matrec}[1]{\mathcal{M}_{#1} (\mathbb{C})}
\def\beq{\begin{equation}}
	\def\eeq{\end{equation}}
\def\pausa{\medskip\noi}
\def\Ax2{\,( S_{E(\cF)^\#_\cV})\hat{}_x }
\begin{document}

 \title{Absolute variation of Ritz values,  \\ principal angles and spectral spread}
 \author{Pedro Massey, Demetrio Stojanoff and Sebastián Zárate 
 	\footnote{Partially supported by 
 		CONICET (PIP 0152 CO), FONCyT (PICT-2015-1505) and UNLP (11X829) 
 		e-mail addresses:  massey@mate.unlp.edu.ar , demetrio@mate.unlp.edu.ar , seb4.zarate@gmail.com   }
 	\\
 	{\small Centro de Matem\'atica, FCE-UNLP,  La Plata
 		and IAM-CONICET, Argentina }}
 \date{}
 \maketitle

 	\begin{abstract}
 		\noindent Let $A$ be a $d\times d$ complex self-adjoint matrix, $\mathcal{X},\mathcal{Y}\subset \mathbb{C}^d$ be $k$-dimensional subspaces and let $X$ be a $d\times k$ complex matrix whose columns form an orthonormal basis of $\mathcal{X}$; that is, $X$ is an isometry whose range is the subspace $\cX$.
 		We construct a $d\times k$ complex matrix $Y_r$ whose columns form an orthonormal basis of $\mathcal{Y}$ and obtain sharp upper bounds for the singular values $s(X^*AX-Y_r^*\,A\,Y_r)$ in terms of submajorization relations involving the principal angles between $\mathcal{X}$ and $\mathcal{Y}$ and the spectral spread of $A$. We apply these results to obtain sharp upper bounds for the absolute variation of the Ritz values of $A$ associated with the subspaces $\mathcal{X}$ and $\mathcal{Y}$, that partially confirm conjectures by Knyazev and Argentati.
 	\end{abstract}
 	
 	\noindent  AMS subject classification: 42C15, 15A60.
 	
 	\noindent Keywords: principal angles, Ritz values, spectral spread, majorization.
 	
 	\tableofcontents

 	\section{Introduction}
 	
 	The problem of computing eigenvalues and invariant subspaces of self-adjoint matrices is ubiquitous in applications of linear algebra and numerical analysis. 
 	For example, given a $d\times d$ complex positive semidefinite matrix $A$, we could be interested in computing a low rank matrix approximation, which is an essential tool to deal with 
 	large and/or sparse self-adjoint matrices (see \cite{Parlett,Saad,StewSun}); in this case,  optimal approximations of $A$ of rank $k$ are constructed in terms of invariant subspaces associated with the $k$ largest eigenvalues of $A$. In general, there is no universal effective solution to these problems ; there are several fundamental algorithmic methods for the computation of {\it approximate} eigenvalues and invariant subspaces, and the best suited such method typically depends on the context. One common fundamental aspect of these algorithms is the assessment of the quality of a subspace (and its associated Ritz values), as a candidate for an approximate invariant subspace (and approximate eigenvalues).
 	Indeed, let $X$ be a
 	$d\times k$ complex matrix whose columns form an orthonormal basis of a $k$-dimensional  subspace $\cX$. Then, $\cX$ is an $A$-invariant subspace if and only if the so-called residual of $A$ at $X$, given by 
 	$R_X=AX-X(X^*AX)$, is the zero matrix. In this case, the Ritz values of $A$ associated with $\cX$ that is, the eigenvalues $\la(X^*AX)=(\la_j(X^*AX))_{j=1}^k$ (counting multiplicities and arranged non-increasingly) of the $k\times k$ submatrix
 	$X^*AX$,  are eigenvalues of $A$. In general, if the residual is {\it small}, then we consider $\cX$ an approximate invariant subspace and its associated Ritz values as approximate eigenvalues of $A$.

 	\pausa
 	The are some other, rather indirect, measures of the quality of a subspace $\cX$ as a possible invariant subspace of the complex $d\times d$ self-adjoint matrix $A$, based of the local behavior of the Ritz values of $A$ associated with 
 	subspaces $\cY$ that are close to $\cX$ (an example of this phenomenon is described at the beginning of Section \ref{sec applicat}). This fact has been one motivation for the study of the so-called 
 	absolute variation of Ritz values (see 
 	\cite{AKPRitz,BosDr,AKProxy,AKMaj,AKFEM,LiLi,Mathias,Ovt,TeLuLi,ZAK,ZK}). In this context, 
 	we are interested in getting upper bounds for (some measure of) the vector
 	\begin{equation} \label{defi abs var rv}
 		|\la(X^*AX)-\la(Y^*AY)|:=(\,|\,\la_j(X^*AX)-\la_j(Y^*AY)\,| \,)_{j=1}^k\in \R^k_{\geq 0}\,,
 	\end{equation} where $X$ and $Y$ are $d\times k$ isometries with ranges $R(X)=\cX$ and $R(Y)=\cY$.
 	By continuity of eigenvalues, the absolute variation of Ritz values is controlled by the distance between the subspaces $\cX$ and $\cY$; as a vector valued measure of distance between the subspaces $\cX$ and $\cY$, we consider the principal angles between $\cX$ and $\cY$, denoted $\Theta(\cX,\cY)=(\theta_j)_{j=1}^k\in [0,\pi/2]^k$, that are the angles whose cosines are the singular values of $X^*Y$ arranged in non-decreasing order and counting multiplicities.

 	\pausa 
 	On the other hand, it turns out that the absolute variation of Ritz values is also controlled by the spread of the eigenvalues of $A$ (independently of the subspaces $\cX$ and $\cY$); notice that in the extreme case in which $A=a\,I_d$ is a multiple of the identity matrix, or equivalently when the spread of the eigenvalues of $A$ is zero, then $X^*AX=a\,I_k=Y^*AY$ so the variation of Ritz values is also zero. 
 	As a vector valued measure of the spread of the eigenvalues of $A$,  Knyazev and Argentati 
 	(see \cite{AKFEM}) have suggested to consider the so-called {\it spectral spread of} $A$, denoted $\sprm(A)$, given by 
 	$$\sprm (A) =  \big(\la_j(A)-\la_{d-j+1}(A)\, \big)_{j=1}^h \in (\R_{\geq 0}^h)\da \,, 
 	$$ where $h = [\frac d2]$ (integer part).
 	In this context, they have conjectured the following (autonomous) {\it a priori} upper bounds 
 	(see \cite[Conjecture 2.1]{AKFEM}): given two $d\times k$ isometries $X$ and $Y$ then
 	\begin{equation} \label{eq conj KA 1}
 		|\la(X^*AX)-\la(Y^*AY)|\prec_w \sin(\Theta(\cX,\cY))\, \sprm(A)\,,
 	\end{equation} where multiplication is performed entry-wise and 
 	$\prec_w$ denotes {\it submajorization}, which is a pre-order relation between real vectors (see Section \ref{sec prelis} for details). 
 	Moreover, if in addition the subspace $\cX$ is $A$-invariant then they have also conjectured that
 	\begin{equation} \label{eq conj KA 2}
 		|\la(X^*AX)-\la(Y^*AY)|\prec_w \sin(\Theta(\cX,\cY))^2\, \sprm(A)\,.
 	\end{equation} 
 	There has been important progress in this direction, and there are several related results and numerical evidence supporting these conjectures (see \cite{AKProxy,AKFEM,MSZ}).
 	
 	\pausa
 	Based on Lidskii's additive inequality, in order to bound the absolute variation of Ritz values we can look for upper bounds of the singular values
 	\begin{equation}\label{defi sing val of var1} 
 		s(X^*AX-Y^*AY)\in\R^k
 		\,,
 	\end{equation}
 	where $s(Z)\in\R^d$ denotes the vector of singular values of a $d\times d$ complez matrix $Z$, counting multiplicities and arranged in non-decreasing order. 
 	As opposed to the variation of Ritz values in Eq. \eqref{defi abs var rv}, 
 	the singular values in Eq. \eqref{defi sing val of var1} actually depend on 
 	the particular choices of partial isometries $X$ and $Y$ with fixed ranges $R(X)=\cX$ and $R(Y)=\cY$.
 	Motivated by geometric insights, given the isometry $X\in\cM_{d,k}(\C)$ with
 	range $R(X)=\cX$ and the subspace $\cY$ as above, 
 	we will choose an explicit (and convenient) $Y=Y_r$
 	and obtain upper bounds for the singular values in Eq. \eqref{defi sing val of var1} in terms of
 	submajorization relations involving the principal angles between $\cX$ and $\cY$ and the spectral spread of $A$ (see Section \ref{sec main results detalles} for a detailed description of our main results regarding this problem). In a sense, our approach corresponds to the study of the stability of the {\it restricted} submatrix extraction process $(A,X,\cY)\mapsto Y_r^*AY_r$ around the fixed data $(X,A)$, where $Y_r=Y_r(X,\,\cY)$ has an explicit dependence that we describe in detail in Section \ref{sec proof of main res}.
 	
 	\pausa Once we have obtained such upper bounds we can apply Lidskii's additive inequality 
 		and obtain the following upper bounds 
 		for the absolute variation of Ritz values: 
 		\begin{equation} \label{eq main result 1 intro abc}
 			|\la(X^*AX)-\la(Y^*AY)|\prec_w \Theta(\cX,\cY)\, \sprm(A)\,,
 		\end{equation} where multiplication is performed entry-wise, and
 		if in addition the subspace $\cX$ is an $A$-invariant subspace: 
 		\begin{equation} \label{eq main result 2 intro abc}
 			|\la(X^*AX)-\la(Y^*AY)|\prec_w \Theta(\cX,\cY)^2\, \sprm(A)\,.
 		\end{equation} 
 		Although our results do not settle the conjectures in their original form, 
 		our upper bounds in 
 		Eqs. \eqref{eq main result 1 intro abc} and \eqref{eq main result 2 intro abc}
 		are comparable with those conjectured in Eqs. \eqref{eq conj KA 1} and \eqref{eq conj KA 2} for the general and the invariant case, specially for perturbations $Y$ of $X$, since 
 		$\lim_{\theta\rightarrow 0^+}\frac{\sin(\theta)}{\theta}=1$. 
 		Moreover, we include a family of examples that show that our vector valued upper bounds are  sharp (see Section \ref{sec applicat}): explicitly, 
 		we show that there exist selections 
 		of $d\times k$ isometries $Y(t)\neq X$ for $t\in (0,1)$ such that 
 		$$
 		\lim_{t\rightarrow 0^+}Y(t)=X\py \lim_{t\rightarrow 0^+}\frac{|\la(X^*AX)-\la(Y(t)^*A\,Y(t))|}{\Theta(\cX,\cY(t))\, \sprm(A)}=(1,\ldots,1)\in\R^k\,.
 		$$ The previous facts suggest that the upper bounds in Eqs. \eqref{eq main result 1 intro abc} and \eqref{eq main result 2 intro abc} could be a useful tool to deal with the case where $Y$ is a perturbation of $X$.

 	\pausa The paper is organized as follows. In section \ref{sec prelis} we introduce the notation and terminology used throughout the paper. In Section \ref{sec 3} we state our main results; indeed, in Section \ref{sec main results detalles} we state our results on the stability of the restricted submatrix extraction process $(A,X,\cY)\mapsto Y_r^*AY_r$ (for $Y_r=Y_r(X,\cY)$ explicitly constructed) around the fixed data $(X,A)$, in terms of the principal angles $\Theta(\cX,\cY)$ and the spectral spread $\sprm(A)$. In Section \ref{sec applicat} we apply the previous results to obtain upper bounds for the absolute variation of Ritz values. In Section \ref{sec proof of main res} we develop the proofs of the results in Section \ref{sec main results detalles}. Our approach is based on bounding the singular values of $X^*AX-Y_r^*AY_r$ in terms of the integral of the singular values of the derivative $\gamma'(t)$ of a smooth curve $\gamma(t)$ joining $X^*AX$ with $Y_r^*AY_r$. An essential part of our argument relies on the construction of a convenient curve $\gamma(t)$. 
 	Then, we apply recent results from \cite{MSZ2} for the spectral spread of self-adjoint matrices to bound the singular values of the derivative $s(\gamma'(t))$.  
 	We have included a short Appendix (Section \ref{appendix}) with some facts related to majorization theory and the spectral spread of self-adjoint matrices, that are used throughout the paper.

 	\section{Preliminaries}\label{sec prelis}

 	Throughout our work we use the following. 
 	
 	\pausa
 	{\bf Notation and terminology}. We let $\mathcal{M}_{d,\,k}(\C)$ be the space of complex $d\times k$ matrices 
 	and write $\mathcal{M}_{d,d}(\C)=\mat$ for the algebra of $d\times d$ complex matrices. 
 	We denote by $\H(d)\subset \mat$ the real subspace of Hermitian (self-adjoint) matrices, by $i\cdot\cH(d)$ the space of skew-Hermitian matrices and by $\matpos$, the cone of
 	positive semi-definite matrices. Also, 
 	$\mathcal{U}(d) \subset \mat$ denotes the group of unitary matrices.
 	Given $1\leq k\leq d$ we denote $\cI(k,d)$ the set of isometries $X\in \cM_{d,\,k}(\C)$ i.e. such that $X^*X=I_k$; notice that $\cI(k,d)$ can be identified in a natural way with the complex Stiefel manifold. Moreover, if $\cX\subset\C^d$ is a $k$-dimensional subspace of $\C^d$ we let 
 	$\cI_\cX(k,d)$ denote those isometries $X\in \cM_{d,\,k}(\C)$ with range $R(X)=\cX$.

 	\pausa
 	For $d\in\N$, let $\I_d=\{1,\ldots,d\}$. 
 	Given a vector $x\in\C^d$ we denote by $D_x=\text{diag}(x_1,\ldots,x_d)$ the diagonal matrix in $\mat$ whose main diagonal is $x$.
 	Given $x=(x_i)_{i\in\I_d}\in\R^d$ we denote by $x\da=(x_i\da)_{i\in\I_d}$ the vector obtained by 
 	rearranging the entries of $x$ in non-increasing order. We also use the notation
 	$(\R^d)\da=\{x\in\R^d\ :\ x=x\da \}$ and $(\R_{\geq 0}^d)\da=
 	\{x\in\R_{\geq 0}^d\ :\ x=x\da \}$. For $r\in\N$, we let $\uno_r=(1,\ldots,1)\in\R^r$.
 	
 	\pausa
 	Given a matrix $A\in\matsad$ we denote by $\la(A)=(\la_i(A))_{i\in\I_d}\in (\R^d)\da$ 
 	the eigenvalues of $A$ counting multiplicities and arranged in 
 	non-increasing order.   
 	For $B\in\mat$, $s(B)=\la(|B|)\in (\R_{\geq 0}^d)\da$ denotes the singular values of $B$, i.e. the eigenvalues of $|B|=(B^*B)^{1/2}\in\matpos$. 
 	Arithmetic operations with vectors are performed entry-wise i.e., in case $x=(x_i)_{i\in\I_k},\,y=(y_i)_{i\in\I_k}\in \C^k $ 
 	then $x+y=(x_i+y_i)_i$, $x\, y=(x_i\,y_i)_i$ and (assuming that $y_i\neq 0$, for $i\in\I_k$) $x/y=(x_i/y_i)_i$, 
 	where these vectors all lie in $\C^k$. Moreover, if we assume further which $x,\,y\in\R^k$ then we write $x\leqp y$ whenever 
 	$x_i\leq y_i$, for $i\in\I_k$.
 	Finally, given a function $f:I\rightarrow \R$ defined on $I\subseteq\R$ and $x=(x_i)_{i\in\I_k}\in I^k$ then we set $f(x)=(f(x_i))_{i\in\I_k}\in \R^k$.\EOE
 	
 	\pausa Next we recall the notion of majorization between vectors, which  will play a central role throughout our work.
 	\begin{fed}\rm 
 		Let $x,\, y\in\R^k$. We say that $x$ is
 		{\it submajorized} by $y$, and write $x\prec_w y$,  if
 		$$
 		\suml_{i=1}^r x^\downarrow _i\leq \suml_{i=1}^r y^\downarrow _i \peso{for every} r\in\I_k\,. 
 		$$ If $x\prec_w y$ and $\tr x \igdef \suml_{i=1}^kx_i=
 		\tr y$,  then we say that $x$ is
 		{\it majorized} by $y$, and write $x\prec y$. \EOE
 	\end{fed}
 	\pausa We point out that (sub)majorization is a preorder relation in $\R^k$ that plays a central role in matrix analysis (see Section \ref{subsec append mayo}).
 	
 	\begin{rem}\label{rem acuerdos}\rm
 		Let $x\in\R_{\geq 0}^k$ and $y\in \R_{\geq 0}^h$ be two vector with {\it non-negative} entries (of different sizes). We extend the notion of submajorization, sum  and the product between $x$ and $y$ in the following sense: 
 		Let $0_n$ denotes the zero vector of $\R^n$ and $\ell:=\max\{h\coma k\}$.  
 		\ben
 		\item We say that $x$ is submajorized by $y$ 
 		if 
 		\begin{equation}\label{size}
 			x\prec_w y \peso{if} \begin{cases} (x\coma 0_{h-k}) \prec_w \quad \ y  & \peso{for} k<h \\ 
 				\quad \quad x  \quad \  \prec_w (y\coma 0_{k-h})   & \peso{for} h<k \end{cases} 
 			\quad , 
 		\end{equation} 
 		\item  Similarly we define $x+y $ and $x \, y \in \R_{\geq 0}^{\ell}\,$, 
 		adding zeros to the right to get two vectors with equal size. \EOE
 		\een
 		
 	\end{rem}
 	\section{On principal submatrices, angles and spectral spread}\label{sec 3}
 	
 	In this section we state our main results. Indeed, in Section \ref{sec main results detalles} we
 	state our results on the stability of the restricted submatrix extraction process $(A,X,\cY)\mapsto Y_r^*AY_r$, where $Y_r=Y_r(X,\cY)$ is defined as $Y_r=U\,X$ for a direct rotation $U$ from $\cX$ to $\cY$. In this case, we bound the vector of singular values $s(X^*AX-Y_r^*AY_r)$ (counting multiplicities and arranged non-increasingly) in terms of the principal angles $\Theta(\cX,\cY)$ and the spectral spread $\sprm(A)$. In Section \ref{sec applicat} below we apply the previous results to obtain upper bounds for the absolute variation of Ritz values.
 	
 	\subsection{On the variation of principal submatrices}\label{sec main results detalles}
 	
 	\begin{rem}\label{pr ang}\rm
 		We begin by recalling some of the notions involved in the statements of our main results.
 		Given $k$-dimensional subspaces $\cX,\,\cY\subset \C^d$, we denote by $\cI_\cX(k,d)$  the set of isometries $X\in \cM_{d,\,k}(\C)$ with range $R(X)=\cX$ (similarly $\cI_\cY(k,d)$\,), 
 		and we consider the {\it principal angles} between
 		$\cX$ and $\cY$, denoted $\Theta(\cX,\cY)=(\theta_j)_{j\in\I_k}\in[0,\pi/2]^k$, given by 
 		\begin{equation}\label{theta}
 			\cos(\Theta)=(\cos(\theta_j))_{j\in\I_k}=(s_{k-j+1}(X^*Y))_{j\in\I_k} \ , 
 		\end{equation}
 		where $X\in\cI_\cX(k,d)$ and  $Y\in\cI_\cY(k,d)$. By construction, $\Theta(\cX,\cY)=\Theta(\cX,\cY)\da$. On the other hand we also consider {\it direct rotations} between subspaces, introduced by Davis and Kahan in \cite{DavKah} (also see \cite{GoLoa}); for the purposes of this work, it is convenient to describe such a concept in a direct and suitable way. In order to do this, we introduce
 			a series of subspaces naturally associated with $\cX$ and $\cY$, that decompose $\C^d$ into mutually orthogonal components; then we describe the direct rotations from $\cX$ onto $\cY$ in terms of block representations with respect to the previous orthogonal decomposition of $\C^d$ (see Eq. \eqref{eq defi rotacion directa prelis}).
 		Indeed, 
 		we consider the generic part in the decomposition of $\C^d$ in terms of the two subspaces $\cX$ and $\cY$ (see Section \ref{subsec main1}), defined  as the subspace 
 		\begin{equation}\label{eq defi part gen}
 			\cG \igdef \big[\,(\cX\cap\cY) \oplus (\cX\cap \cY^\perp)\oplus 
 			(\cX^\perp\cap \cY)\oplus (\cX^\perp\cap \cY^\perp)\,\big]\orto  \ . 
 		\end{equation} 
 		Denote by $p=\dim \cX\cap \cY^\perp=\dim \cX^\perp\cap \cY$, \ 
 		$r=\dim \cX\cap \mathcal{G}= \dim \cX^\perp\cap \mathcal{G} = \dim \cY\cap \cG$, 
 		$$
 		\ese_1 = (\cX\cap \cY^\perp)\oplus (\cX\cap\,\cG) \ \coma \ 
 		\ese_2 = (\cX^\perp\cap \,\cY)\oplus (\cX^\perp\cap\, \cG) \py 
 		\cS_3=  (\cX^\perp\cap\, \cY)\oplus(\cY\cap \,\cG)  \ .
 		$$
 		Notice that $\ese_1 \inc \cX$, $  \ese_2 \inc \cX\orto$ and $\ese_3 \inc \cY$.   
 		Consider the angles  $\Theta'=\Theta(\ese_1\coma \ese_3)\in[0,\pi/2]^{p+r}$, 
 		and the diagonal matrices 
 		$$
 		C=\text{diag}(\,\cos(\Theta')\,)\py S=\text{diag}(\,\sin(\Theta')\,)\in\cM_{p+r}(\C)^+ \ .
 		$$
 		Finally, we say that a unitary $U\in\cU(d)$ is a direct rotation from $\cX$ onto $\cY$ if there exists $\cB$ an orthonormal basis (ONB)  of $\C^d$ obtained by juxtaposition of ONB's for 
 		$\cX\cap\cY \coma \ese_1\coma \ese_2$ and $\cX^\perp\cap \cY^\perp$
 		such that the block matrix representation of $U$ with respect to $\cB$ and these  subspaces is 
 		\begin{equation}\label{eq defi rotacion directa prelis}\begin{pmatrix}
 				I&0 &0 & 0\\
 				0&C&-S & 0\\
 				0&S&C & 0\\
 				0&0&0 &I
 			\end{pmatrix}
 			\barr{l} \cX\cap\cY \\ \ese_1\\ \ese_2\\ \cX^\perp\cap \cY^\perp \earr
 		\end{equation}
 		Moreover, we remark that in this case, if $s=\dim\cX\cap \cY$, then 
 		\begin{equation} \label{los thetas}
 			\Theta'=\left(\ \frac{\pi}{2}\,\uno_p\coma \theta_1\coma \ldots \coma \theta_{r}\, \right) \py
 			\Theta (\cX\coma \cY) =(\Theta',0_{s}) \ .
 		\end{equation}
 		Notice that since $\cX\coma \cY\subset \C^d$ and $\dim \cX =\dim \cY$, a direct rotation $U$ between $\cX$ and $\cY$ always exists. This relies on the fact that $p=\dim \cX\cap \cY^\perp=\dim \cX^\perp\cap \cY$ (see \cite{DavKah} for more details).
 		On the other hand, recall that given a self-adjoint matrix
 		$A\in \cH(d)$ then the spectral spread of $A$, denoted $\sprm (A)$, is given by
 		$$\sprm (A) =  \big(\spr_j(A)\, \big)_{j\in \I_h}
 		=\big(\la_j(A)-\la\ua_j(A)\, \big)_{j\in \I_h} \in (\R_{\geq 0}^h)\da \,, 
 		$$ where $h = [\frac d2]$ (integer part). See section \ref{spread sec} 
 		for more information about this notion. 
 		\EOE
 	\end{rem}

 	\pausa
 	In the following formulae we operate among vectors with non-negative entries 
 	of different sizes, using the notation given in Remark \ref{rem acuerdos}. 
 	The proofs of Theorems \ref{teo principal1} and \ref{teo principal2} 
 	below are developed in Section \ref{sec proof of main res}.

 	\begin{teo} \label{teo principal1}
 		Let $A,\, B\in\cH(d)$ and let $\cX,\,\cY\subset \C^d$ be $k$-dimensional subspaces.
 		Let $U=U(\cX,\cY)$ be a direct rotation of $\cX$ onto $\cY$ and $\Theta=\Theta(\cX,\cY)\da  \in [0,\pi/2]^k$ 
 		the principal angles between $\cX$ and $\cY$ defined in \eqref{theta}. Given 
 		$X\in\cI_\cX(k,d)$, if we let 
 		$$Y_r=Y_r(X,U)\igdef UX\in \cI_\cY(k,d)$$ then we have that 
 		\begin{equation}\label{eq teo main1}
 			s(X^*\,A\,X-Y_r^*\,B\,Y_r )\prec_w s(A-B)+ 	
 			\Theta(\cX,\cY)\da\, \left( \frac{ \,\sprm \,(A)  + \sprm\, (B)}{2}\right)\,,
 		\end{equation} 
 		where submajorization, sums and products are as in Remark \ref{rem acuerdos}. 
 		\QED
 	\end{teo}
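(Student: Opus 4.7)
Following the strategy sketched in the introduction, the plan is to construct a smooth curve in $\cH(k)$ joining $X^{*}AX$ to $Y_r^{*}BY_r$, bound the singular values of its derivative via a spectral-spread commutator inequality, and integrate. Let $K\in i\cH(d)$ denote the skew-Hermitian generator of the direct rotation $U$; in the orthogonal basis of Eq.~\eqref{eq defi rotacion directa prelis} it has the block form
\begin{equation*}
K \;=\; \begin{pmatrix} 0 & 0 & 0 & 0 \\ 0 & 0 & -\text{diag}(\Theta') & 0 \\ 0 & \text{diag}(\Theta') & 0 & 0 \\ 0 & 0 & 0 & 0\end{pmatrix},
\end{equation*}
so that $U(t):=\exp(tK)$ satisfies $U(0)=I$, $U(1)=U$ and $[U(t),K]=0$ for every $t\in[0,1]$. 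Set $X(t):=U(t)X$, $A(t):=(1-t)A+tB$ and $\gamma(t):=X(t)^{*}A(t)X(t)$, so that $\gamma(0)=X^{*}AX$ and $\gamma(1)=Y_r^{*}BY_r$.

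A direct differentiation using $X'(t)=KX(t)$ and $K^{*}=-K$ gives $\gamma'(t)=X(t)^{*}[A(t),K]X(t)+X(t)^{*}(B-A)X(t)$. By the fundamental theorem of calculus and the integral submajorization inequality $s\bigl(\int_0^1 f(t)\,dt\bigr)\prec_w \int_0^1 s(f(t))\,dt$,
\begin{equation*}
s(X^{*}AX - Y_r^{*}BY_r)\;\prec_w\;\int_0^1 s\!\left(X(t)^{*}[A(t),K]X(t)\right)dt \;+\; s(A-B),
\end{equation*}
where the isometric-compression bound $s(X(t)^{*}(B-A)X(t))\prec_w s(B-A)$ takes care of the contribution of the second summand of $\gamma'(t)$.

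To bound the commutator summand, I would invoke the vector-valued spectral-spread commutator inequality from \cite{MSZ2}: the block structure of $K$ encodes precisely the nonzero principal angles $\Theta'$, so that for each $t\in[0,1]$
\begin{equation*}
s\!\left(X(t)^{*}[A(t),K]X(t)\right)\;\prec_w\;\Theta\da\cdot\sprm(A(t))
\end{equation*}
with the size conventions of Remark~\ref{rem acuerdos}; the zero entries of $\Theta$ coming from $\cX\cap\cY$ correspond to blocks where $K$ vanishes and are compatible with the bound. To handle the interpolation in $A,B$, note that by Ky Fan's theorem the partial sums $M\mapsto\sum_{j=1}^{r}\spr_j(M)$ are convex on $\cH(d)$, so $\sum_{j=1}^{r}\spr_j(A(t))\leq(1-t)\sum_{j=1}^{r}\spr_j(A)+t\sum_{j=1}^{r}\spr_j(B)$ for every $t\in[0,1]$ and every $r$. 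Integrating in $t$ shows that the vector $v$ with entries $v_j:=\int_0^1\spr_j(A(t))\,dt$ is non-increasing, non-negative and satisfies $v\prec_w\tfrac{1}{2}\bigl(\sprm(A)+\sprm(B)\bigr)$. Since $\Theta\da\geq 0$ is also sorted non-increasingly, Abel summation yields $\Theta\da\cdot v\prec_w\Theta\da\cdot\tfrac{1}{2}(\sprm(A)+\sprm(B))$, and combining the previous inequalities produces Eq.~\eqref{eq teo main1}.

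The main obstacle is the commutator bound in the third step: one must identify the right form of the spectral-spread inequality from \cite{MSZ2} so that the compressed commutator $X(t)^{*}[A(t),K]X(t)$ is controlled by $\Theta\cdot\sprm(A(t))$ rather than by a looser expression involving $s(K)$, which would double each rotation angle because of the block anti-symmetry of $K$. Once this commutator estimate is in place, the rest of the argument is a careful tracking of the integration and of the varying vector sizes dictated by Remark~\ref{rem acuerdos}.
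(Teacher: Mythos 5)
Your proposal reproduces the paper's architecture almost exactly: the same curve $\gamma(t)=X(t)^*L(t)X(t)$ with $L(t)=(1-t)A+tB$ and $X(t)=U(t)X$ a one-parameter family of direct rotations, the same splitting of $\gamma'(t)$ into a $B-A$ term and a commutator term, the integral submajorization $s(\gamma(1)-\gamma(0))\prec_w\int_0^1 s(\gamma'(t))\,dt$, the convexity (equivalently, Lidskii-type subadditivity) of the partial sums of the spectral spread along the segment, and the final product-submajorization step. Your packaging via the generator $K$ and $U(t)=\exp(tK)$ is a slightly cleaner version of the paper's explicit $C(t),S(t)$ blocks and, since $K$ lives on the full four-block decomposition, it even sidesteps the paper's separate reduction to the case $\cX^\perp\cap\cY^\perp=\{0\}$.

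The genuine gap is the one you flag yourself: the estimate $s\bigl(X(t)^*[L(t),K]X(t)\bigr)\prec_w\Theta\da\,\sprm(L(t))$ is asserted, not proved, and it is the entire content of the theorem. There is no single ``commutator inequality'' in \cite{MSZ2} that delivers it directly; applying the generalized commutator bound of Theorem \ref{teo gen Kitt} to the uncompressed $[L(t),K]$ loses exactly the factor you worry about, since each angle occurs twice among the singular values of the skew-Hermitian $K$. The paper closes this step by exploiting the compression by $X$: writing $A(t)=U(t)^*L(t)U(t)$ in blocks with respect to $\C^d=(\cX\cap\cY)\oplus\cS_1\oplus\cS_2$, a direct computation (Eq. \eqref{eq deriv para teo 2sb}) shows that
\begin{equation*}
X^*\bigl(A(t)U'(0)-U'(0)A(t)\bigr)X
=2\,\mathrm{Re}\!\left(\begin{pmatrix}0& A_{13}(t)\\ 0& A_{23}(t)\end{pmatrix}\begin{pmatrix}0\\ D_{\Theta'}\end{pmatrix}\right),
\end{equation*}
i.e. twice the real part of (an off-diagonal block of $A(t)$) times the diagonal matrix of angles. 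Weyl's multiplicative inequality together with $s(\mathrm{Re}\,E)\prec_w s(E)$ then gives the bound $2\,\Theta\, s\bigl(\begin{pmatrix}A_{13}(t)\\ A_{23}(t)\end{pmatrix}\bigr)$, and the input actually needed from \cite{MSZ2} is the off-diagonal-block inequality $2\,s(H_{12})\prec_w\sprm(H)$ for $H\in\cH(d)$ (Theorem \ref{vale con 2 gral}), whose built-in factor $2$ exactly cancels the $2$ produced by taking the real part. Without this specific block computation and this specific form of the spectral-spread inequality, your third step does not go through, so you should supply them before the rest of your (otherwise correct) argument can be accepted.
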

 	
 	\pausa
 	Notice that we have considered the more general situation of two self-adjoint matrices $A,\,B\in\cH(d)$. In the next section, we apply this result in the particular case $A=B$ to obtain upper bounds for the absolute variation of the Ritz values in the self-adjoint case.
 	
 	\pausa In the following result we deal with the so-called invariant case, and obtain a stronger upper bound for small perturbations $\cY$ of $\cX$.
 	
 	\begin{teo} \label{teo principal2}
 		With the same hypothesis and notation of Theorem \ref{teo principal1}, assume further that the $k$-dimensional subspace $\cX$ is $A$-invariant. 
 		Then we have that
 		\begin{equation}\label{eq teo main2}
 			s(X^*\,A\,X-Y_r^*\,A\,Y_r )\prec_w \Theta^2(\cX,\cY)\da \, \sprm \, (A)\,,
 		\end{equation} 
 		where submajorization and products are as in Remark \ref{rem acuerdos}. 
 		\QED
 	\end{teo}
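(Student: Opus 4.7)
The plan is to interpolate $X$ and $Y_r$ along a smooth curve of isometries generated by the direct rotation, reduce the problem to bounding the integral of the singular values of the derivative of the associated curve of compressions of $A$, and exploit the $A$-invariance of $\cX$ to force a vanishing initial derivative, hence a quadratic (rather than linear) dependence on the principal angles.

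Working in the orthogonal decomposition $\C^d=(\cX\cap\cY)\oplus\ese_1\oplus\ese_2\oplus(\cX^\perp\cap\cY^\perp)$ of Remark \ref{pr ang} and the block form \eqref{eq defi rotacion directa prelis} of the direct rotation, I would define a smooth interpolation $U_t$ by replacing $(C,S)$ with $(C_t,S_t):=(\cos(tD),\sin(tD))$ for $D=\text{diag}(\Theta')$, so that $U_0=I$ and $U_1=U$. Set $Y(t):=U_tX\in\cI(k,d)$ and $\gamma(t):=Y(t)^*AY(t)\in\cH(k)$. A Ky-Fan-type submajorization for integrals of Hermitian-valued curves gives $s(X^*AX-Y_r^*AY_r)\prec_w\int_0^1 s(\gamma'(t))\,dt$, reducing the problem to bounding $s(\gamma'(t))$. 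The $A$-invariance of $\cX$ makes $A$ block-diagonal with respect to $\cX\oplus\cX^\perp$; writing the blocks of $A$ as $A_{ij}$ in the four-part decomposition, a direct computation yields
\begin{equation*}
\gamma'(t)=\begin{pmatrix} 0 & -A_{12}\,S_t\,D \\ -D\,S_t\,A_{12}^* & S_tP^*C_t+C_tP\,S_t \end{pmatrix},\qquad P:=DA_{33}-A_{22}D,
\end{equation*}
so that every block of $\gamma'(t)$ carries a factor $S_t=\sin(tD)$. This is the quantitative face of the identity $\gamma'(0)=X^*[A,\tilde H]X=0$: the skew-Hermitian generator $\tilde H=U_0'$ interchanges $\ese_1\subset\cX$ and $\ese_2\subset\cX^\perp$ and annihilates the remaining two components, so that both $A\tilde HX$ and $\tilde HAX$ have columns in $\cX^\perp$ and their $X^*$-compressions vanish.

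To finish, I would prove the blockwise spectral spread estimate $s(\gamma'(t))\prec_w 2D\sin(tD)\cdot\sprm(A)$ (with sizes reconciled via Remark \ref{rem acuerdos}), which is the invariant-case analogue of the bound $s(\gamma'(t))\prec_w D\,\sprm(A)$ that underlies Theorem \ref{teo principal1}, improved by the extra $S_t$ factor coming from the $A$-invariance. Integrating and using the entrywise inequality $2(1-\cos\theta)\le\theta^2$ on $[0,\pi/2]$ then yields
\begin{equation*}
\int_0^1 2D\sin(tD)\,dt = 2(I-\cos D)\le D^2,
\end{equation*}
and plugging back in produces the stated bound $s(X^*AX-Y_r^*AY_r)\prec_w\Theta^2(\cX,\cY)\da\,\sprm(A)$, after incorporating the zero entries of $\Theta$ coming from $\cX\cap\cY$. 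The main obstacle I anticipate is the spectral spread estimate itself: the $(1,2)$ block $A_{12}\,S_t\,D$ is controlled by the off-diagonal coupling inside the compression $A|_\cX$, whereas the $(2,2)$ block involves the mixed commutator $P=DA_{33}-A_{22}D$ between compressions of $A$ on opposite sides of the invariant splitting $\cX\oplus\cX^\perp$. Assembling these heterogeneous contributions into a single submajorization by $\sprm(A)$, and correctly handling the dimensional matching of Remark \ref{rem acuerdos}, is the technical heart of the argument and calls on the refined spectral spread identities from \cite{MSZ2}.
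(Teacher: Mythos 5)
Your overall strategy coincides with the paper's: interpolate via the direct rotation, reduce to $s(X^*AX-Y_r^*AY_r)\prec_w\int_0^1 s(\gamma'(t))\,dt$ (Proposition \ref{prop estimando distancias1}), observe that $A$-invariance of $\cX$ forces every term of $\gamma'(t)$ to carry a factor $S_t=\sin(tD)$, and integrate. Your block formula for $\gamma'(t)$ is correct, and your integration step $\int_0^1 2\theta\sin(t\theta)\,dt=2(1-\cos\theta)\le\theta^2$ is a valid (even slightly sharper) variant of the paper's $\sin(t\theta)\le t\theta$ followed by $\int_0^1 2t\,dt=1$.

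However, the step you yourself flag as ``the technical heart'' --- the submajorization $s(\gamma'(t))\prec_w 2D\sin(tD)\,\sprm(A)$ for the \emph{full} block matrix --- is asserted, not proved, and it does not follow readily from the available tools. The obstruction is structural: in your $(1,2)$ block the factor $S_t$ sits on the \emph{right} of $A_{12}$, while in the $(2,2)$ block it appears sandwiched on alternating sides of the generalized commutator $P=DA_{33}-A_{22}D$; there is no common one-sided factorization of $\gamma'(t)$ by $S_t$, so you cannot apply Weyl's multiplicative inequality once to the whole matrix. Splitting by Weyl's additive inequality into the anti-diagonal part (whose singular values are the \emph{doubled} list $(s(A_{12}S_tD),s(A_{12}S_tD))$, each controlled via Theorem \ref{vale con 2 gral}) plus the diagonal part (controlled via Theorem \ref{teo gen Kitt}) produces a bound of the form $c\,D\sin(tD)\,\sprm(A)$ with $c>2$, which after integration overshoots $\Theta^2\sprm(A)$. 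The paper circumvents exactly this difficulty by \emph{first} reducing to the case $\cX\cap\cY=\{0\}$ (Proposition \ref{para teo 3.2. un poco mas}, a continuity argument that perturbs $\cY$ into generic position and passes to the limit), so that the problematic $(1,2)$ and $(2,1)$ blocks are absent and $\gamma'(t)=2\,\mathrm{Re}\bigl(S_t(A_{22}D_\Theta-D_\Theta A_{11})C_t\bigr)$ is a single commutator-type expression handled by Theorem \ref{teo gen Kitt}; a further embedding into $\C^{d'}$ removes the restriction $k\le d/2$ needed for that perturbation. To complete your argument you must either supply a proof of your blockwise estimate with the constant $2$, or incorporate these two reductions.
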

 	
 	\pausa
 	In the next section, we apply Theorems \ref{teo principal1} and \ref{teo principal2} and obtain upper bounds for the absolute variation of the Ritz values in the self-adjoint case.
 	
 	\subsection{An application: on the absolute variation of Ritz values}\label{sec applicat}
 	
 	As already mentioned in the Introduction, one motivation for the study of absolute variation of Ritz values of $A\in\cH(d)$ around a subspace $\cX$ in $\C^d$, is that such variation provides an indirect measure of the quality of $\cX$ as a possible invariant subspace of $A$. 
 	As an example of this phenomenon, we recall the following inequality, recently obtained in \cite{MSZ}: let $P_{\cX+\cY}$ denote the orthogonal projection onto the subspace $\cX+\cY$ of $\C^d$, let 
 		$R_X=AX-X(X^*AX)$ and $R_Y=AY-Y(Y^*AY)$ denote the residuals of $A$ at $X$ and $Y$,
 	let $s(P_{\cX+\cY}\  R_X),\, s(P_{\cX+\cY}\ R_Y)\in \R^k$ denote the vectors of singular values of $P_{\cX+\cY}\  R_X$ and $P_{\cX+\cY}\ R_Y$ (counting multiplicities and arranged non-increasingly); if $\cX$ and $\cY$ are in a acute relative position (i.e. so that $\theta_j<\pi/2$, for $j\in\I_k$) then
 	\begin{equation} \label{eq de MSZ1}
 		|\la(X^*AX)-\la(Y^*AY)| 
 		\prec_w [s(P_{\cX+\cY}\  R_X) + s(P_{\cX+\cY}\ R_Y)] \,(\tan(\theta_j))_{j=1}^k\,.
 	\end{equation}
 	In case $\cX$ and $\cY$ are close to each other, then the residuals of $A$ at $X$ and $Y$ are comparable; moreover, since the tangents $\tan(\theta_j)\approx \theta_j$ are comparable, for $j\in\I_k$, then conclude that the inequality in Eq. \eqref{eq de MSZ1} provides an upper bound for the first order absolute variation of the Ritz values of $A$ around $\cX$, in terms of the residual $R_X$ (where we measured the distance between subspaces in terms of the principal angles). Hence, if 
 	$R_X$ is small, then this first order variation (that can be tested numerically) is also small.

 	\pausa We are interested in obtaining {\it autonomous}
 	upper bounds for the absolute variation of Ritz values of $A\in\cH(d)$ associated with the $k$-dimensional subspaces $\cX$ and $\cY$, in terms of the principal angles between $\cX$ and $\cY$, and the spectral spread of the matrix $A$. In this context, Knyazev and Argentati have conjectured (see \cite[Conjecture 2.1]{AKFEM}) the upper bounds in Eq. \eqref{eq conj KA 1} (for the general subspaces)
 	and \eqref{eq conj KA 2} (for an $A$-invariant subspace $\cX$).
 	The following results partially confirm these conjectures.

 	\begin{teo}\label{teo applica1}
 		Let $A\in\cH(d)$ and let $X,\, Y\in \cI(k,d)$, with ranges $\cX=R(X)$ and $\cY=R(Y)$ such that $\dim \cX=\dim\cY=k$. If $\Theta=\Theta(\cX,\cY)$ then:
 		\ben
 		\item We have that 
 		\begin{equation}\label{eq teo aplic1}
 			|\la(X^*AX)-\la(Y^*AY)|\prec_w \Theta\, \sprm (A)\,.
 		\end{equation}
 		\item If we further assume that $\cX$ is $A$-invariant, we have that 
 		\begin{equation}\label{eq teo aplic3}
 			|\la(X^*AX)-\la(Y^*AY)|\prec_w  \Theta^2\, \sprm(A)\,.
 		\end{equation}
 		\een
 	\end{teo}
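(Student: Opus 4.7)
The plan is to combine the stability bounds for the restricted submatrix extraction process (Theorems \ref{teo principal1} and \ref{teo principal2}) with Lidskii's additive inequality, after reducing to the case where the orthonormal basis $Y$ of $\cY$ is the specific one $Y_r=UX$ provided by a direct rotation $U$.

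First I would observe that the left-hand side of \eqref{eq teo aplic1} and \eqref{eq teo aplic3} depends only on the subspaces $\cX,\,\cY$, not on the chosen isometries: given any two isometries $Y,Y_r\in\cI_\cY(k,d)$, there exists $V\in\matu(k)$ with $Y_r=YV$, so $Y_r^*AY_r=V^*(Y^*AY)V$ and hence $\la(Y_r^*AY_r)=\la(Y^*AY)$. Therefore I may (and do) replace $Y$ by $Y_r=UX$, where $U=U(\cX,\cY)$ is a direct rotation as in Remark \ref{pr ang}, so that Theorems \ref{teo principal1} and \ref{teo principal2} become applicable to the pair $(X,Y_r)$.

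Next I invoke the Lidskii--Wielandt inequality in its absolute form: for any $M,N\in\cH(k)$,
\begin{equation*}
|\la(M)-\la(N)|\prec_w s(M-N).
\end{equation*}
Applying this with $M=X^*AX$ and $N=Y_r^*AY_r$, both elements of $\cH(k)$, gives
\begin{equation*}
|\la(X^*AX)-\la(Y^*AY)|=|\la(X^*AX)-\la(Y_r^*AY_r)|\prec_w s(X^*AX-Y_r^*AY_r).
\end{equation*}

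For part (1), apply Theorem \ref{teo principal1} with $B=A$: the term $s(A-B)$ vanishes and $(\sprm(A)+\sprm(B))/2=\sprm(A)$, so
\begin{equation*}
s(X^*AX-Y_r^*AY_r)\prec_w \Theta(\cX,\cY)\da\,\sprm(A),
\end{equation*}
with the sum/product/submajorization convention of Remark \ref{rem acuerdos}. Chaining this with the previous submajorization via transitivity of $\prec_w$, and noting that $\Theta\da\,\sprm(A)$ and $\Theta\,\sprm(A)$ coincide after the non-increasing rearrangement used in the definition of $\prec_w$, yields \eqref{eq teo aplic1}. For part (2), under the extra assumption that $\cX$ is $A$-invariant, Theorem \ref{teo principal2} gives
\begin{equation*}
s(X^*AX-Y_r^*AY_r)\prec_w \Theta^2(\cX,\cY)\da\,\sprm(A),
\end{equation*}
and the same transitivity argument yields \eqref{eq teo aplic3}.

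The argument is essentially a short assembly: there is no real obstacle once Theorems \ref{teo principal1} and \ref{teo principal2} are in hand, since Lidskii's inequality is precisely the bridge between the singular-value bound for the matrix difference and the componentwise absolute variation of eigenvalues. The only bookkeeping subtlety is the size convention of Remark \ref{rem acuerdos}, used to compare the $k$-vectors on the left with the products $\Theta\,\sprm(A)$ and $\Theta^2\,\sprm(A)$, which potentially live in spaces of different dimensions; this is handled transparently by the zero-padding convention.
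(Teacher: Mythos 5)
Your proposal is correct and follows essentially the same route as the paper: reduce to $Y_r=UX$ using unitary equivalence of $Y^*AY$ and $Y_r^*AY_r$, apply Lidskii's additive inequality for singular values, and then invoke Theorem \ref{teo principal1} with $B=A$ (resp. Theorem \ref{teo principal2}) together with transitivity of $\prec_w$. The remark that $\Theta\da=\Theta$ by the convention of Remark \ref{pr ang} disposes of the only bookkeeping point.
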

 	
 	\begin{proof}
 		Let $U\in\cU(d)$ be a direct rotation from $\cX$ onto $\cY$ and let $Y_r=U\,X$ 
 		be as in Theorem \ref{teo principal1}. Since $R(Y_r)=R(Y)$ 
 		then $V = Y_r^*Y \in \cU(k)$, and we get that 
 		$$
 		\la(Y^*A\,Y)=
 		\la(V \, Y^*A\,Y \, V^*)= \la(Y_r^*\, P_\cY \, A\,P_\cY \,  Y_r)
 		= \la(Y_r^*\, A\, Y_r)\in \R^k\,.$$
 		By Lidskii's inequality (Theorem \ref{teo ah}) and Theorem \ref{teo principal1} we have that 
 		\begin{eqnarray*}
 			|\la(X^*AX)-\la(Y^*AY)|&=&|\la(X^*AX)-\la(Y_r^*\,A\,Y_r)|\\  &\prec_w&|\la(X^*AX-Y_r^*A\,Y_r)|\da\\
 			& = &s(X^*AX-Y_r^*A\,Y_r) 
 			\prec_w \Theta\, \sprm (A)\,.
 		\end{eqnarray*}
 		Item 2 follows from a similar argument, using Theorem \ref{teo principal2}.
 	\end{proof}
 	
 	\pausa
 	\begin{rem}\rm
 		The only difference between the Conjectures \eqref{eq conj KA 1} and \eqref{eq conj KA 2}
 		and our main result Theorem \ref{teo applica1} relies in the slightly bigger
 		upper bounds $\Theta$ (resp. $\Theta ^2$) instead of $\sin \Theta$ 
 		(resp. $\sin ^2 \Theta$). These numbers are asymptotically close when the angles 
 		$\Theta \rightarrow 0^+$, and they can be globally compared with a constant $\pi/2$. 
 		
 		\pausa
 		Nevertheless, Conjectures \eqref{eq conj KA 1} and \eqref{eq conj KA 2} are 
 		supported by large computational experimentation. 
 		In order to explore the content of our main results we consider the following examples, 
 		where we show that both Conjectures \eqref{eq conj KA 1} and \eqref{eq conj KA 2} and 
 		Theorem \ref{teo applica1} are sharp inequalities.
 		\EOE
 	\end{rem}
 	
 	\begin{exa}\rm
 		Consider $a>b>0$ and let 
 		$$
 		A=\begin{pmatrix}
 			0 & 0 & a & 0 \\
 			0 & 0 & 0 & b \\
 			a & 0 & 0 & 0 \\
 			0 & b & 0 & 0 
 		\end{pmatrix}\in \cH(4)\,.
 		$$If $\{e_1,\,e_2,\,e_3,\,e_4\}$ denotes the canonical basis of $\C^4$, we let 
 		$\cX=\text{Span}\{e_1,\,e_2\}$. 
 		On the other hand, given $\theta\in[0,\pi/2]$ let 
 		\begin{equation}\label{eq defi fs}
 			f_1(\theta)=\cos(\theta)\,e_1+\sin(\theta)\,e_3\py f_2(\theta)=\cos(\theta)\,e_2+\sin(\theta)\,e_4\,.
 		\end{equation}
 		Then, we set $\cY(\theta)=\text{Span}\{f_1(\theta),\,f_2(\theta)\}$, for $\theta\in[0,\pi/2]$.
 		It is straightforward to show that $\Theta(\cX\coma \cY(\theta))=(\theta,\,\theta)$. Then, it follows that the isometries $X,\,Y_r(\theta)$ as in Theorem \ref{teo principal1} (associated with the subspaces $\cX$ and $\cY(\theta)$) are given by 
 		\begin{equation}\label{eq defi las isometrias}
 			X=\begin{pmatrix}
 				1 & 0 \\
 				0&1\\
 				0&0\\
 				0&0
 			\end{pmatrix}
 			\py 
 			Y_r(\theta)=\begin{pmatrix}
 				\cos(\theta) & 0 \\
 				0&\cos(\theta)\\
 				\sin(\theta) &0\\
 				0&\sin(\theta)
 			\end{pmatrix}\,.
 		\end{equation} Direct computations with the matrices described above show that 
 		$$
 		X^*AX=0 \py Y_r(\theta)^*AY_r(\theta)=
 		\begin{pmatrix}
 			a\,\sin(2\theta) & 0 \\
 			0 & b\,\sin(2\theta)  
 		\end{pmatrix}
 		$$
 		Hence, in this case we get that for $\theta\in[0,\pi/2]$, 
 		$$
 		|\la(X^*AX)-\la(Y_r(\theta)^*AY_r(\theta))|=|\la(X^*AX-Y_r(\theta)^*AY_r(\theta))|=\sin(2\theta)\,(a,b)\,
 		$$ so in particular, Lidskii's inequality holds with equality in this case.
 		On the other hand, it turns out that 
 		$\la(A)=(a,b,-b,-a)\in(\R^4)\da$ which shows that $\sprm(A)=2\,(a,b)$. Therefore, the inequalities in Theorems \ref{teo principal1} (with $A=B$) and \ref{teo applica1} (item 1.) become 
 		$$
 		\sin(2\theta)\,(a,b)\prec_w 2\,(\theta\coma \theta)\,(a,b)=2\,\theta\,(a,b)\, , 
 		$$ where the submajorization relation above is equivalent to the inequalities
 		$$
 		\sin(2\theta)\,a\leq 2\,\theta \,a\py  \sin(2\theta)\,(a+b)\leq 2\,\theta \,(a+b)
 		$$
 		which are sharp; this last claim can be seen by considering $\theta\rightarrow 0^+$.
 		Notice that we further get that 
 			$$
 			\lim_{\theta\rightarrow 0^+}\frac{|\la(X^*AX)-\la(Y_r(\theta)^*AY_r(\theta))|}{\Theta(\cX,\,\cY(\theta))\,\sprm(A)}=\uno_2\,
 			$$ where we have considered the entry-wise quotient of the vectors. This last fact shows that our upper bound for the absolute variation of Ritz values is sharp.
 		\EOE
 	\end{exa}
 	
 	\begin{exa} \rm
 		Consider $a>b>0$ and let 
 		$$
 		A=\begin{pmatrix}
 			a & 0 & 0 & 0 \\
 			0 & b & 0 & 0 \\
 			0 & 0 & 0 & 0 \\
 			0 & 0 & 0 & 0 
 		\end{pmatrix}\in \cH(4)\,.
 		$$ We consider the canonical basis $\{e_1,\,e_2,\,e_3,\,e_4\}$ of $\C^4$, and we let 
 		$\cX=\text{Span}\{e_1,\,e_2\}$. Notice that in this case $\cX$ is an $A$-invariant subspace. On the other hand, given $\theta\in[0,\pi/2]$ let 
 		$\cY(\theta)=\text{Span}\{f_1(\theta),\,f_2(\theta)\}$, where $f_j(\theta)$ are as in Eq. \eqref{eq defi fs}. Hence, as in the previous example, we have that $\Theta(\cX\coma \cY(\theta))=(\theta,\,\theta)$. In this case, the isometries $X,\,Y_r(\theta)$ as in Theorem \ref{teo principal2} are given by Eq. \eqref{eq defi las isometrias}.
 		Direct computations with the matrices described above show that 
 		$$
 		X^*AX=\begin{pmatrix}
 			a & 0 \\
 			0 & b
 		\end{pmatrix}
 		\py Y_r(\theta)^*AY_r(\theta)=
 		\begin{pmatrix}
 			a\,\cos^2(\theta) & 0 \\
 			0 & b\,\cos^2(\theta)  
 		\end{pmatrix}
 		$$
 		Hence, in this case we get that for $\theta\in[0,\pi/2]$, 
 		$$
 		|\la(X^*AX)-\la(Y_r(\theta)^*AY_r(\theta))|=|\la(X^*AX-Y_r(\theta)^*AY_r(\theta))|=\sin^2(\theta)\,(a,b)\,
 		$$ so in particular, Lidskii's inequality holds with equality in this case.
 		On the other hand, it turns out that 
 		$\la(A)=(a,b,0,0)\in(\R^4)\da$ which shows that $\sprm(A)=(a,b)$. Therefore, the inequalities in Theorems \ref{teo principal2}  and \ref{teo applica1} (item 2.) become 
 		$$
 		\sin^2(\theta)\,(a,b)\prec_w (\theta\coma \theta)^2\,(a,b)=\theta^2\,(a,b)\, , 
 		$$ where the submajorization relation above is equivalent to the inequalities
 		$$
 		\sin^2(\theta)\,a\leq \theta^2 \,a\py  \sin^2(\theta)\,(a+b)\leq \theta^2 \,(a+b)
 		$$
 		which are sharp; this last claim can be seen by considering $\theta\rightarrow 0^+$. 
 		Notice that we further get that 
 			$$
 			\lim_{\theta\rightarrow 0^+}\frac{|\la(X^*AX)-\la(Y_r(\theta)^*AY_r(\theta))|}{\Theta(\cX,\cY(\theta))^2\,\sprm(A)}=\uno_2\,,
 			$$where we have considered the entry-wise quotient of the vectors. This last fact shows that our upper bound for the absolute variation of Ritz values is sharp.
 			 		\EOE
 	\end{exa}

 	\pausa As a final comment, notice that it's natural to wonder whether the estimates
 	from Theorems \ref{teo principal1} and \ref{teo principal2} can be used to obtain
 	estimates for the distance between the compressions (that we can think of as lower rank approximations of $A$) given by $P_\cX\,A\,P_\cX$ and 
 	$P_\cY\,A\,P_\cY$. It turns out that this is not the case. Indeed, in the trivial case in which
 	$A=I$ then we have that $P_\cX\,I\,P_\cX-P_\cY\,I\,P_\cY=P_\cX-P_\cY$; but we have that 
 	$\sprm(I)=0$ is the zero vector, so there is no hope in obtaining an upper bound for  $P_\cX\,A\,P_\cX-P_\cY\,A\,P_\cY$ in terms $\sprm(A)$ in general. Also, in general there is no dependence of $P_\cX\,A\,P_\cX-P_\cY\,A\,P_\cY$ in terms of $\Theta(\cX,\cY)^2$ when $\cX$ is $A$-invariant (again, take $A=I$ to see this). 
 	
 	\section{Proof of the main results}\label{sec proof of main res}
 	
 	In this section we present complete proofs of our main results. Our approach is based on some
 	geometric arguments and inequalities for the spectral spread recently obtained 
 	in \cite{MSZ2}.

 	\subsection{Proof of Theorem \ref{teo principal1}}\label{subsec main1}
 	
 	Throughout this section we adopt the notation and terminology in Theorem \ref{teo principal1}. Hence,
 	we consider:
 	\ben
 	\item $A,\, B\in\cH(d)$;
 	\item  $\cX,\,\cY\subset \C^d$, $k$-dimensional subspaces and $X\in\cI_\cX(k,d)$;
 	\item a direct rotation $U=U(\cX,\cY)\in \matud$, of $\cX$ onto $\cY$.
 	\item $Y_r=Y_r(X,\cY)\igdef UX\in \cI_\cY(k,d)$.
 	\een
 	\pausa
 	Our approach to prove Theorem \ref{teo principal1} is as follows: we will 
 	consider smooth curves
 	\begin{equation}\label{eq defi curvas}
 		L(\cdot):[0,1]\rightarrow\cH(d) \py Y_r(\cdot):[0,1]\rightarrow \cI(k,d)
 	\end{equation}
 	such that $L(0)=A$, $L(1)=B$, $Y_r(0)=X$ and $Y_r(1)=Y_r$. Then, we consider the smooth curve
 	\begin{equation}\label{eq defi gamma1}
 		\gamma:[0,1]\rightarrow \cH(k)\peso{given by} \gamma(t)=Y_r(t)^*\, L(t)\, Y_r(t) \peso{for} t\in[0,1]\,.
 	\end{equation}
 	Once we have constructed $\gamma(\cdot)$ we will apply the following result.
 	Recall that given $Z\in\mat$, $s(Z)\in\R^d$ denotes the vector of singular values, counting multiplicities and arranged in non-increasing order.
 	\begin{pro}\label{prop estimando distancias1} 
 		Let $\gamma:[0,1]\rightarrow \matrec{m\coma n}$ be a smooth curve such that $\gamma(0)=C$ and $\gamma(1)=D$. Then
 		$$
 		s(D-C)\prec_w \int_0^1 s(\gamma'(t))\ dt\,.
 		$$
 	\end{pro}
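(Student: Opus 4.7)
The plan is to reduce the submajorization relation to a sequence of scalar inequalities, one for each Ky Fan $r$-norm $\|Z\|_{(r)} := \sum_{i=1}^r s_i(Z)$ with $r \in \{1,\dots,\min(m,n)\}$, and then exploit the fact that each $\|\cdot\|_{(r)}$ is a norm on $\matrec{m,n}$ (this is a classical fact; it follows from the variational characterization $\|Z\|_{(r)} = \max\{\,|\tr(U^*Z)|\ :\ U\in\matrec{m,n},\ \|U\|_{\mathrm{op}}\leq 1,\ \rk(U)\leq r\,\}$, or equivalently from the fact that $s(\cdot)\prec_w s(\cdot)+s(\cdot)$).

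First, by the fundamental theorem of calculus applied entry-wise to the smooth curve $\gamma$, we have $D-C = \int_0^1 \gamma'(t)\,dt$. Since $\|\cdot\|_{(r)}$ is a norm (in particular convex and continuous), the standard triangle-type inequality for vector-valued Riemann integrals yields
$$
\|D-C\|_{(r)} \;=\; \Big\|\int_0^1 \gamma'(t)\,dt\Big\|_{(r)} \;\leq\; \int_0^1 \|\gamma'(t)\|_{(r)}\,dt \;=\; \int_0^1 \sum_{i=1}^r s_i(\gamma'(t))\,dt,
$$
which can be justified by passing to Riemann sum approximations of the integral on the left and using the triangle inequality for $\|\cdot\|_{(r)}$ together with the continuity of $t\mapsto \gamma'(t)$.

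Next, I would observe that the vector $v := \int_0^1 s(\gamma'(t))\,dt \in \R_{\geq 0}^{\min(m,n)}$ has $i$-th coordinate $v_i = \int_0^1 s_i(\gamma'(t))\,dt$. Since for each fixed $t$ the coordinates $s_1(\gamma'(t))\geq s_2(\gamma'(t))\geq \cdots$ are non-increasing, integrating preserves this order, so $v = v^\downarrow$. Therefore, using Fubini to swap the finite sum with the integral,
$$
\sum_{i=1}^r v_i^\downarrow \;=\; \sum_{i=1}^r \int_0^1 s_i(\gamma'(t))\,dt \;=\; \int_0^1 \sum_{i=1}^r s_i(\gamma'(t))\,dt.
$$
Combining with the previous display gives $\sum_{i=1}^r s_i(D-C)\leq \sum_{i=1}^r v_i^\downarrow$ for every $r$, which is precisely $s(D-C)\prec_w \int_0^1 s(\gamma'(t))\,dt$.

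I do not expect a serious obstacle here: the conceptual content is that Ky Fan norms translate the vector-valued submajorization into ordinary norm triangle inequalities, and the fact that $s(\gamma'(t))$ is pointwise already in non-increasing order guarantees that the integrated vector needs no rearrangement. The only technical point worth spelling out is the Bochner-type estimate $\|\int f\|_{(r)} \leq \int \|f\|_{(r)}$, which for a smooth (hence continuous) integrand reduces to a limit of Riemann sums.
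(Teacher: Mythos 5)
Your proof is correct and follows essentially the same route as the paper's: the fundamental theorem of calculus, Riemann-sum approximation of the integral, and the triangle inequality for partial sums of singular values. The only difference is presentational: you scalarize through the Ky Fan norms $\|\cdot\|_{(r)}$, whereas the paper keeps the argument vector-valued by applying Weyl's additive inequality $s\big(\sum_{j}\gamma'(t_j)\,\Delta_n\big)\prec_w \sum_{j}s(\gamma'(t_j))\,\Delta_n$ to the Riemann sums and then using that submajorization is preserved under limits --- by Fan's dominance principle these two formulations are equivalent.
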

 	\begin{proof} First notice that by the fundamental theorem of calculus we have that
 			\begin{equation}\label{eq para prop 411}
 				D-C=\gamma(1)-\gamma(0)=\int_0^1\gamma'(t)\ dt\,.
 			\end{equation}On the other hand, for $n\in\N$ consider
 			the regular partition $\{t_0=0<t_1<\ldots<t_n=1\}$ of $[0,1]$ so that $t_j=\frac{j}{n}$, for $j\in\{0\}\cup\I_n$ and $\Delta_j=t_{j}-t_{j-1}=\frac 1 n=\Delta_n$, for $j\in\I_n$. Then, 
 			\begin{equation}\label{eq para prop 412}
 				\int_0^1\gamma'(t)\ dt=\lim_{n\rightarrow\infty} \sum_{j=1}^n \gamma'(t_j)\ \Delta_n
 				\ \ , \ \ \int_0^1 s(\gamma'(t))\ dt=\lim_{n\rightarrow\infty} \sum_{j=1}^n s(\gamma'(t_j))\ \Delta_n
 			\end{equation}
 			where we have used that the curves $\gamma'(t)\in\matrec{m\coma n} $ and $s(\gamma'(t))\in\R^d$, for $t\in[0,1]$, are continuous (by hypothesis and by the continuity of singular values). By Weyl's additive inequality (see Theorem \ref{teo ag}) we have that 
 			\begin{equation}\label{eq para prop 413}
 				s(\sum_{j=1}^n \gamma'(t_j)\ \Delta_n)\prec_w
 				\sum_{j=1}^n s(\gamma'(t_j))\ \Delta_n
 			\end{equation}
 			The result now follows from Eqs. \eqref{eq para prop 411}, \eqref{eq para prop 412}, \eqref{eq para prop 413}, the continuity of singular values and the following fact: if 
 			$(x_n)_{n\in\N}$ and $(y_n)_{n\in\N}\in\R^d$ are sequences that converge to $x$ and $y\in\R^d$ respectively, and such that $x_n\prec_w y_n$ for $n\in\N$, then $x\prec_w y$.
 	\end{proof}

 	\pausa
 	Although the arguments considered in the previous paragraphs are valid for 
 	any choice of smooth curves $L(\cdot)$ and $Y_r(\cdot)$ as in Eq. \eqref{eq defi curvas}, we are interested in
 	choices that lead to better upper bounds. Thus, we are interested in those curves 
 	$L(\cdot)$ and $Y_r(\cdot)$ for which the associated curve $\gamma(\cdot)$
 	in Eq. \eqref{eq defi gamma1} is {\it minimal} in a certain sense. We point out that we will not study the corresponding minimality problem, but rather we will choose 
 	$L(\cdot)$ and $Y_r(\cdot)$ that have separately minimal properties, and use these choices to build 
 	$\gamma(\cdot)$.
 	
 	\pausa
 	For $L(\cdot)$ there is a natural choice, namely the line segment joining $A$ and $B$, i.e.
 	\begin{equation}\label{eq defi Adt}
 		L(t)= (1-t)\, A + t\, B\peso{for} t\in[0,1]\,.
 	\end{equation}
 	Next we construct $Y_r(t)$, based on the notion of direct rotation as developed in \cite{DavKah};
 	hence, we consider the following notions related to the direct rotation of
 	$\cX$ onto $\cY$ as in Remark \ref{pr ang}. 
 	Indeed, given these subspaces we have the orthogonal decomposition (see \cite{Hal})
 	$$
 	\C^d=(\cX\cap \cY) \oplus( \cX\cap \cY^\perp) \oplus \mathcal{G}\oplus(\cX^\perp \cap \cY)\oplus 
 	(\cX^\perp \cap \cY^\perp)\,.
 	$$
 	Here $\cG\subseteq\C^d$ stands for the generic part of the pair of subspaces $\cX$ and $\cY$ (see Eq. \eqref{eq defi part gen}). We point out that some of these subspaces can be null. 
 	It turns out that for our purposes, we can assume further (see the proof of Theorem 
 	\ref{teo principal1} below) that $$\cX^\perp\cap \cY^\perp=\{0\}\,.$$
 	Recall that we can decompose the generic part $\cG=\cX\cap \mathcal{G}\oplus \cX^{\perp} \cap \mathcal{G}$
 	into two subspaces such that $\dim \cX\cap \mathcal{G}=\dim \cX^\perp\cap \mathcal{G}=r$. Therefore, in our case we get the decomposition
 	\begin{equation}\label{eqdecompC0} \C^d=
 		(\cX\cap \cY )\oplus\cS_1\oplus \cS_2\,,  \end{equation} 
 	where $$\cS_1=(\cX\cap \cY^\perp)\oplus (\cX\cap \mathcal{G}) \py \cS_2=(\cX^\perp\cap \cY)\oplus (\cX^{\perp} \cap \mathcal{G})\,.$$ Since $\dim\cX=\dim\cY$ we conclude that $\dim \cX\cap \cY^\perp=\dim \cX^\perp\cap \cY=p$ so, in particular, $\dim\cS_1=\dim \cS_2=r+p$. 
 	
 	\pausa Let $U$ be a direct rotation of $\cX$ onto $\cY$. 
 	In this case, there exist orthonormal bases of $\cS_1$ and $\cS_2$
 	and diagonal positive semidefinite matrices 
 	$$C=\text{diag}(\,\cos(\Theta')\,)\py S=\text{diag}(\,\sin(\Theta')\,)\in\cM_{p+r}(\C)^+ $$ where $\Theta'=(\frac{\pi}{2}\,\uno_p\coma \theta_1,\ldots,\theta_{r})$ denote the principal angles between 
 	$\cS_1$ and $(\cY\cap \cG)\oplus( \cX^\perp\cap \cY)$, such that  
 	\begin{equation}\label{eq defi dir rot u1}
 		U=\begin{pmatrix}
 			I&0 &0\\
 			0&C&-S\\
 			0&S&C\end{pmatrix}\,,\end{equation}  where the block matrix representation is with respect to the ONB obtained by juxtaposition of the ONB's of
 	$\cX\cap\cY$, $\cS_1$ and $\cS_2$. Here $(\theta_1,\ldots,\theta_{r})\in (0,\pi/2)^r$ denote the principal angles between $\cX\cap \mathcal{G}$ and $\cY \cap \mathcal{G}$.
 	We now define
 	\begin{equation}\label{eq defi udt}
 		U(t) 
 		=\begin{pmatrix}
 			I&0 &0\\
 			0&C(t)&-S(t)\\
 			0&S(t)&C(t)\end{pmatrix}\in\matud\,,\end{equation}
 	where $S(\cdot),\,C(\cdot): \ [0\, ,\,1] \rightarrow \H(p+r)$ are given by 
 	\begin{equation}\label{eq defi sycdt}
 		S(t)=\text{diag}(\,\sin(t\,\Theta')\,)\py C(t)=\text{diag}(\,\cos(t\,\Theta')\,)\in\cM_{p+r}(\C)^+\,.
 	\end{equation}
 	Then $U(0)=I$ and $U(1)=U$. Now that we have explicitly constructed  $U(t)$ we define
 	\begin{equation}\label{eq defi yrdt}
 		Y_r(t)=U(t)\,X= \begin{pmatrix}
 			I&0 \\
 			0 &C(t)\\
 			0 & S(t)
 		\end{pmatrix} \peso{with} X=\begin{pmatrix}
 			I&0\\
 			0&I\\
 			0&0
 		\end{pmatrix}\,,\end{equation} where the matrix block representation above is with respect to the decomposition 
 	
 	\noindent $\C^d=(\cX\cap \cY)\oplus \cS_1\oplus \cS_2$, as before. 
 	
 	\pausa
 	In this case, we have that
 	\begin{equation}\label{eq defi gamma posta1}
 		\gamma(t)=Y_r(t)^*\, L(t)\, Y_r(t)=X^*\,(\,U(t)^* \, L(t)\,U(t)\, )\, X\peso{for} t\in [0,1]\,.
 	\end{equation}
 	Notice that by taking a derivative,
 	\begin{equation}\label{eq derivada de gamma1}
 		\gamma'(t)=X^*\,U(t)^* \, L'(t)\,U(t)\,X + X^*\,(\,(U'(t))^* \, L(t)\,U(t) + U(t)^* \, L(t)\,U'(t)\,  )\,X\,.
 	\end{equation}
 	The first term in Eq. \eqref{eq derivada de gamma1} can be dealt with using Lidskii's inequality in a simple way.
 	Thus, we are left with the analysis of the second term. Using basic trigonometric identities 
 	we get that 
 	$U(t+h)=U(t)\,U(h)$ for $t,\,h,\,t+h\in[0,1]$; hence $U'(t)=U(t)\,U'(0)$ and similarly $(U'(t))^*=-U'(0)\,U(t)^*$. Therefore, if we let 
 	\begin{equation}\label{eq def A de t}
 		A(t)=U(t)^*\, L(t)\,U(t)
 	\end{equation} then we have that 
 	\begin{equation} \label{eq el segund term de gamma prima1}
 		X^*\,(\,(U'(t))^* \, L(t)\,U(t) + U(t)^* \, L(t)\,U'(t)\,  )\,X=
 		X^*\,(A(t)\,U'(0)-U'(0)\,A(t))\,X\,.
 	\end{equation}
 	By taking a derivative in Eq. \eqref{eq defi udt} we get that 
 	$$
 	U'(0)
 	=\begin{pmatrix}
 		\ 0&0 &0\\
 		\ 0&0&-D_{\Theta'}\\
 		\ 0& \ D_{\Theta'}&0\end{pmatrix}\in i\cdot \matsad\,.
 	$$ where $D_{\Theta'}$ is the diagonal matrix with main diagonal $\Theta'=(\frac{\pi}{2}\,\uno_p\coma \theta_1,\ldots,\theta_{r})$ defined above, and
 		$i\cdot \matsad$ denotes the space of skew-Hermitian matrices.
 	
 	\pausa 
 	Consider the block matrix representation of $A(t)=(A_{ij}(t))_{i,j=1}^3$ with respect to
 	the decomposition in Eq. \eqref{eqdecompC0}. A direct computation, using the previous block matrix representations, shows that 
 	\begin{equation} \label{eq deriv para teo 2sb}
 		X^*\,(A(t)\,U'(0)-U'(0)\,A(t))\,X=2\ \text{Re}\,(
 		\begin{pmatrix}
 			0& A_{13}(t)  \\ 0& A_{23}(t)
 		\end{pmatrix}\,\begin{pmatrix} 0 \\ D_{\Theta'} \end{pmatrix})\,.
 	\end{equation}
 	
 	\begin{rem}\rm
 		With the previous notation, using
 		Weyl's multiplicative inequality (see Theorem \ref{teo ag}) we conclude that 
 		\begin{equation}\label{eq para teo cot dercanpat}
 			s(X^*\,(A(t)\,U'(0)-U'(0)\,A(t))\,X)\prec_w 2 \, \Theta\, s\left(\,\begin{pmatrix}
 				A_{13}(t)  \\ A_{23}(t)
 			\end{pmatrix}\,\right)\,,\end{equation}
 		where we have used that $\Theta=(\Theta',0_s)\in(\R_{\geq 0}^k)\da$ and item 2 in Remark \ref{rem acuerdos}.
 		The previous facts suggest to develop a bound for the singular values of the anti-diagonal block of $A$ according to the representation
 		\begin{equation}\label{eq bloque antid}
 				\left(
 				\begin{array}{cc|c}
 					A_{11}(t)&A_{12}(t) &A_{13}(t)\\
 					A_{21}(t)&A_{22}(t)&A_{23}(t)\\
 					\hline
 					A_{31}(t)&A_{32}(t) &A_{33}(t)
 				\end{array}
 				\right)
 		\end{equation}
 		%
 		(that formally corresponds to the decomposition $\C^d=(\cX\cap\cY\oplus \cS_1)\oplus \cS_2$, where $\cS_1$ and $\cS_2$ are described after Eq. \eqref{eqdecompC0}) in terms of spectral properties of the matrix $A(t)\in\cH(d)$ (defined in Eq. \eqref{eq def A de t}). \EOE
 	\end{rem}
 	
 	\pausa
 	Next, we describe a bound of the singular values of the anti-diagonal 
 	block as in Eq. \eqref{eq bloque antid} in terms of the spectral spread of $A(t)$.

 	\begin{teo}[See \cite{MSZ2}]\label{vale con 2 gral}
 		Let $H =\bm{cc}H_{11}&H_{12} \\H_{12}^*&H_{22} \em \barr {c}\C^k\\ \C^r\earr \in \cH(k+r)$ be arbitrary. Then 
 		\rm
 		\begin{equation}\label{con 2 gral}
 			2\,s(H_{12})\prec_w \sprm(H) \ . 
 		\end{equation} \QED
 	\end{teo}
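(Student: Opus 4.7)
The plan is to realize $2 H_{12}$ as the off-diagonal block of a Hermitian difference $H - UHU^*$ for a suitable sign-flip unitary $U$, and then control the top partial sums of eigenvalues of this difference using Ky Fan's classical sum inequality for eigenvalues.

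First, setting $U = \mathrm{diag}(I_k, -I_r) \in \mathcal{U}(k+r)$, a direct $2 \times 2$ block computation yields
\[
H - UHU^* \;=\; 2 \begin{pmatrix} 0 & H_{12} \\ H_{12}^* & 0 \end{pmatrix} \;=:\; 2\,M,
\]
where $M$ is the standard Hermitian dilation of $H_{12}$. Its spectrum consists of the pairs $\pm s_i(H_{12})$ for $i = 1, \ldots, \min(k,r)$ together with $|k-r|$ zeros; in particular $\lambda_i(H - UHU^*) = 2\, s_i(H_{12})$ for $1 \le i \le \min(k,r)$.

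Next I would invoke the Ky Fan sum inequality $\sum_{i=1}^j \lambda_i(A+B) \le \sum_{i=1}^j \lambda_i(A) + \sum_{i=1}^j \lambda_i(B)$ (valid for Hermitian $A, B$ and immediate from the variational formula $\sum_{i=1}^j \lambda_i(X) = \max \{\Tr(PX) : P \text{ is a rank-}j \text{ orthogonal projection}\}$), applied to $A = H$ and $B = -UHU^*$. Since $UHU^*$ is unitarily equivalent to $H$, the identity $\lambda_i(-X) = -\lambda_{d+1-i}(X)$ (with $d = k+r$) gives $\lambda_i(-UHU^*) = -\lambda_{d+1-i}(H)$, so
\[
2 \sum_{i=1}^j s_i(H_{12}) \;=\; \sum_{i=1}^j \lambda_i(H - UHU^*) \;\le\; \sum_{i=1}^j \lambda_i(H) - \sum_{i=1}^j \lambda_{d+1-i}(H) \;=\; \sum_{i=1}^j \spr_i(H)
\]
for every $1 \le j \le \min(k,r)$, which is exactly the partial-sum inequality for submajorization on the initial block of indices.

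Finally, following the conventions in Remark \ref{rem acuerdos}, I would extend $2\, s(H_{12})$ by zeros to length $h = \lfloor d/2 \rfloor$. Since $\min(k,r) \le h$ always holds, for $\min(k,r) < j \le h$ the left-hand partial sum no longer grows while the partial sums of $\sprm(H)$ keep accumulating non-negative entries, so the inequality persists through $j = h$, yielding $2\, s(H_{12}) \prec_w \sprm(H)$. The main conceptual step here is really the very first one---spotting that the sign-flip conjugation by $U = \mathrm{diag}(I_k, -I_r)$ simultaneously extracts $2 H_{12}$ as an off-diagonal block and produces a Hermitian difference whose top partial eigenvalue sums are bounded above by $\sum_{i=1}^j \spr_i(H)$ via Ky Fan. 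Once that trick is in hand, the rest of the argument is a one-line computation together with the elementary identity for $\lambda_i(-X)$.
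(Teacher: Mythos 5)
Your argument is correct. Note that the paper does not actually prove Theorem \ref{vale con 2 gral}: it is imported verbatim from \cite{MSZ2} and used as a black box, so you have supplied a proof where the paper gives none. Your derivation checks out at every step: with $U=\mathrm{diag}(I_k,-I_r)$ one indeed gets $H-UHU^*=2\begin{pmatrix}0&H_{12}\\H_{12}^*&0\end{pmatrix}$, whose largest $\min(k,r)$ eigenvalues are $2\,s_i(H_{12})$ (this is exactly Proposition \ref{hat trick como en el futbol} of the Appendix); Ky Fan's inequality applied to $H$ and $-UHU^*$, together with $\la_i(-UHU^*)=-\la_{d+1-i}(H)$, gives $\sum_{i=1}^{j}2\,s_i(H_{12})\le\sum_{i=1}^{j}\big(\la_i(H)-\la_{d+1-i}(H)\big)=\sum_{i=1}^{j}\spr_i(H)$ for $j\le\min(k,r)$; and since $\sprm(H)$ is already non-increasing with non-negative entries and $\min(k,r)\le\lfloor d/2\rfloor$, the remaining partial sums up to $h$ follow by padding with zeros as in Remark \ref{rem acuerdos}. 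This sign-flip-plus-Ky-Fan route is the natural one and is in the same spirit as the source \cite{MSZ2}; its value here is that it makes the present paper self-contained on this point using only Theorem \ref{teo ah}-type eigenvalue inequalities already quoted in the Appendix.
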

 	
 	\pausa We can now prove our first main result. We will re-write the statement for the reader's convenience.
 	
 	\pausa {\bf Theorem \ref{teo principal1}.\ }\rm
 	Let $A,\, B\in\cH(d)$ and let $\cX,\,\cY\subset \C^d$ be $k$-dimensional subspaces.
 	Let $U=U(\cX,\cY)$ be a direct rotation of $\cX$ onto $\cY$ and $\Theta=\Theta(\cX,\cY)\da  \in [0,\pi/2]^k$ 
 	the principal angles between $\cX$ and $\cY$ defined in \eqref{theta}. Given 
 	$X\in\cI_\cX(k,d)$, if we let 
 	$$Y_r=Y_r(X,U)\igdef UX\in \cI_\cY(k,d)$$ then we have that 
 	$$
 	s(X^*\,A\,X-Y_r^*\,B\,Y_r )\prec_w s(A-B)+ 	
 	\Theta(\cX,\cY)\da\, \left( \frac{ \,\sprm \,(A)  + \sprm\, (B)}{2}\right)\,,
 	$$
 	where submajorization, sums and products are as in Remark \ref{rem acuerdos}. 
 	
 	\pausa{\it Proof.}  Assume first that $\cX^\perp \cap \cY^\perp=\{0\}$.
 	Let $\gamma(\cdot):[0,1]\rightarrow \cH(k)$ be defined as in Eq. \eqref{eq defi gamma posta1}, where $Y_r(\cdot)$ is defined as in Eq. \eqref{eq defi yrdt}. Notice that by construction $\gamma$ is a smooth curve such that $\gamma(0)=X^*\,A\,X$ and $\gamma(1)=Y_r^*\, B \,Y_r$\,. By Proposition \ref{prop estimando distancias1} we have that 
 	\begin{equation} \label{eq para estimar distancia main1}
 		s(X^*\,A\,X-Y_r^*\,B\,Y_r )\prec_w  \int_0^1 s(\gamma'(t))\ dt\,.
 	\end{equation}
 	
 	\pausa
 	Using Eq. \eqref{eq derivada de gamma1} and Weyl's inequality for singular values (see Theorem \ref{teo ag}), we have that 
 	\begin{equation} \label{eq weyl para gamma deriv1}
 		\barr{rl}
 		s(\gamma'(t)) & \prec_w s(X^*\,U(t)^* \, L'(t)\,U(t)\,X) \\&\\& \quad
 		+ s(X^*\,(\,(U'(t))^* \, L(t)\,U(t) + U(t)^* \, L(t)\,U'(t)\,  )\,X)\,, \earr
 	\end{equation} 
 	where $L(t)=(1-t)\,A+t\,B$, for $t\in [0,1]$ as before. Hence
 	$L'(t)=B-A$ and therefore
 	$$s(X^*\,U(t)^* \, L'(t)\,U(t)\,X)=s(X^* \, U(t)^* \,(B-A)\, U(t)\,X)\prec_w s(A-B)\,,$$ where we have used Remark \ref{remxleqy}. The previous facts together with item 3. in Lemma \ref{lem submaj props1} imply that 
 	\begin{equation} \label{eq prim desig para gamma deriv1}
 		\int_0^1 s(X^*\,U(t)^* \, L'(t)\,U(t)\,X)\ dt\prec_w s(A-B)\,.
 	\end{equation}
 	Moreover, using Eqs. \eqref{eq el segund term de gamma prima1},  \eqref{eq deriv para teo 2sb}, \eqref{eq para teo cot dercanpat}
 	and Theorem \ref{vale con 2 gral} (also see item 5. in Lemma \ref{lem submaj props1}) we get that 
 	\begin{equation} \label{eq desi la otra part de gamma deriv1}
 		s(X^*\,(\,(U'(t))^* \, L(t)\,U(t) + U(t)^* \, L(t)\,U'(t)\,  )\,X)\prec_w \Theta\,\sprm (A(t))\,.
 	\end{equation}
 	Notice that $A(t)$ and $L(t)$ are unitary conjugates, so they have the same spectral spread.
 	Hence, by Proposition \ref{spreadskii aditivo} we get that
 	\begin{equation} \label{eq relac spe spre1}
 		\sprm(A(t))=\sprm(L(t))\prec (1-t)\, \sprm(A) + t\,\sprm(B)\,.
 	\end{equation}
 	By Eqs. \eqref{eq desi la otra part de gamma deriv1} , \eqref{eq relac spe spre1} 
 and item 3. in Lemma \ref{lem submaj props1},
 	\begin{eqnarray*}
 		& &\int_0^1 s(X^*\,(\,(U'(t))^* \, L(t)\,U(t) + U(t)^* \, L(t)\,U'(t)\,  )\,X)\ dt \prec_w \\ & &
 		\Theta\, \left(
 		\sprm(A) \int_0^1  (1-t) \ dt + \sprm(B) \int_0^1  t \ dt \right)
 		= \frac{1}{2}\,\Theta\, (\,\sprm(A)+\sprm(B)\,) \,.
 	\end{eqnarray*}
 	Using this last submajorization relation, together with Eq. \eqref{eq weyl para gamma deriv1} and Eq. \eqref{eq prim desig para gamma deriv1} we get that 
 	\begin{equation}\label{eq desi para gamma der main2}
 		\int_0^1 s(\gamma'(t))\ dt\prec_w s_j(A-B)_{j=1}^k+ 	
 		\Theta\, \left( \frac{ \,\sprm \,(A)  + \sprm\, (B)}{2}\right)\,,
 	\end{equation} where we have used the first part of item 3. in Lemma \ref{lem submaj props1}.
 	The result now follows from Eq. \eqref{eq para estimar distancia main1} and \eqref{eq desi para gamma der main2}. 
 	
 	\pausa In case $\cX^\perp\cap\cY^\perp\neq \{0\}$, we consider the subspace $\cZ=\cX+\cY\subset\C^d$ and 
 	$Z\in \cI_\cZ(m,d)$, where $m=\dim\,\cZ$. Let $A'=Z^*\,A\,Z,\,B'=Z^*\,B\,Z\in \cH(m)$ and let 
 	$X'=Z^*\,X\in \cI(k,m)$. If we let $\cX'=R(X')$ and $\cY'=R(Z^*\,Y_r)$ denote the subspaces that are the ranges of $X'$ and $Z^*\,Y_r$ in $\C^m$ we get that $(\cX')^\perp \cap(\cY')^\perp=\{0\}$. 
 	Moreover, since $(X')^*Y'=(X^*Z)(Z^*Y_r)= X^*\,P_\cZ\,Y_r=X^*Y_r$ (where $P_\cZ\in\cH(d)$ denotes the orthogonal projection onto $\cZ\subset \C^d$) we conclude that $\Theta(\cX',\cY')=\Theta(\cX,\cY)\in\R^k$, by definition of principal angles.
 	By the first part of the proof, we now have that 
 	\begin{equation}\label{eq teo main3}
 		s((X')^*\,A'\,X'-(Y_r')^*\,B'\,Y_r' )\prec_w s_j(A'-B')_{j=1}^k+ 	
 		\Theta\, \left( \frac{ \,\sprm \,(A')  + \sprm\, (B')}{2}\right)\,,
 	\end{equation}
 	where $Y'_r=U'\,X'$, and $U'$ is a direct rotation of $\cX'$ onto $\cY'$.  But notice that 
 	$Z^*UZ$ is a direct rotation from $\cX'$ onto $\cY'$ so 
 	we can take
 	$Y'_r= Z^* \,U\,Z \,Z^*X=Z^*\,Y_r$.
 	Hence, 
 	$$(X')^*\,A'\,X'=X^* \,P_{\cZ}\,A\,P_{\cZ}\,X=X^*\,A\,X \py 
 	$$
 	$$
 	(Y_r')^*\,B'\,Y_r' =Y_r^*\, P_\cZ\, B\, P_\cZ\,Y_r= Y_r^*\,B\,Y_r $$ 
 	Therefore, 
 	\begin{equation} \label{eq teo main4}
 		s((X')^*\,A'\,X'-(Y_r')^*\,B'\,Y_r' )=s(X^*\,A\,X-Y_r^*\,B\,Y_r)\,.
 	\end{equation} On the other hand, using Remark \ref{remxleqy} (or the interlacing inequalities) we now see that
 	$$
 	s(A'-B')=s(Z^*\,(A-B)\,Z)\prec_w s(A-B)\py $$ 
 	$$\sprm(A')\prec_w \sprm(A) \ \ , \ \ \sprm(B')\prec_w \sprm(B)\,.
 	$$ Thus, in this case the result follows from the previous submajorization relations together with Eq. \eqref{eq teo main3} and Eq. \eqref{eq teo main4}.
 	\QED

 	\subsection{Proof of Theorem \ref{teo principal2}}\label{subsec main2}
 	
 	\begin{nota}\label{Not 3.3}
 		Throughout this section we adopt the notation and terminology in Theorem \ref{teo principal2}. Hence,
 		we consider:
 		\ben
 		\item $A\in\cH(d)$;
 		\item  $\cX,\,\cY\subset \C^d$, $k$-dimensional subspaces such that $\cX$ is $A$-invariant;
 		\item a direct rotation $U=U(\cX,\cY)\in \matud$, from $\cX$ onto $\cY$.
 		\item $Y_r=Y_r(X,\cY) \igdef UX\in \cI_\cY(k,d)$. 
 		\item We denote by $\Theta=\Theta(\cX,\cY)\da  \in [0,\pi/2]^k$ 
 		the principal angles between $\cX$ and $\cY$. \EOE
 		\een
 	\end{nota}
 	
 	\pausa
 	Our approach to prove the result is similar to that in the previous section. We first prove Theorem \ref{teo principal2} in the particular case that $k=\dim\cX=\dim\cY\leq d/2$ (see Propositions \ref{pro para teo 3.2} and \ref{para teo 3.2. un poco mas} below). Then, we reduce the general case to the previous particular case. 
 	In what follows we consider the decomposition 
 	$$\C^d=\cX\oplus \cX^\perp\peso{assuming that} \cX\cap \cY =\{0\} \py \cX^\perp\cap\cY^\perp=\{0\}\,.$$ 
 \pausa Notice that in this case we have that 
 		\begin{equation}\label{eq agregada 123}
 			A=\begin{pmatrix} 
 				A_{11} & 0 \\ 0 & A_{22}
 			\end{pmatrix} \peso{i.e.} A_{12}=A_{21}^*=0 \,.
 	\end{equation}
 	Given a direct rotation $U$ from $\cX$ onto $\cY$ we consider the block representation 
 	$$
 	U(t)=\begin{pmatrix} 
 		C(t) & -S(t) \\ S(t) & C(t)
 	\end{pmatrix}\,,
 	$$
 	where $S(\cdot),\,C(\cdot): \ [0\, ,\,1] \rightarrow \H(r)$ are given by 
 	$$
 	S(t)=\text{diag}(\,\sin(t\,\Theta)\,)\py C(t)=\text{diag}(\,\cos(t\,\Theta)\,)\in\cM_{p+r}(\C)^+\,.
 	$$ 
 	Here  $\Theta=\Theta(\cX,\cY)\da =(\frac{\pi}{2}\,\uno_p\coma \theta_1,\ldots,\theta_{r})
 	\in [0,\pi/2]^k$   
 	where, as before,  
 	$$
 	p=\dim\cX\cap \cY^\perp=\dim\cX^\perp\cap \cY  \py 
 	r=\dim \cX\ominus (\cX\cap \cY^\perp)=k-p \ . 
 	$$
 	In this case, we define
 	\begin{equation}\label{eq defi yrdt2}
 		Y_r(t)=U(t)\,X= \begin{pmatrix}
 			C(t)\\
 			S(t)
 		\end{pmatrix}\peso{with} X=\begin{pmatrix}
 			I\\
 			0
 		\end{pmatrix}\,,\end{equation} 
 	where the matrix block representation  is with respect to the decomposition $\C^d=\cX\oplus \cX^\perp$, as before. 
 	In this case, we set
 	\begin{equation}\label{eq defi gamma posta4}
 		\gamma(t)=Y_r(t)^*\, A\, Y_r(t)=X\,(\,U(t)^* \, A\,U(t)\, )\, X=X^*\,A(t)\,X\peso{for} t\in [0,1]\,,
 	\end{equation} where $A(t)=U(t)^*\,A\,U(t)$. By taking derivatives,
 	\begin{equation}\label{eq derivada de gamma2}
 		\gamma'(t)= X^*\,A'(t)\,X=X^*\,(A(t)\,U'(0)-U'(0)\,A(t))\,X\,,
 	\end{equation} where we have used that $U(t+h)=U(t)\,(h)$, for $t,\,h,\,t+h\in [0,1]$, so that 
 	$U'(t)=U(t)\,U'(0)$ and similarly $(U'(t))^*=-U'(0)\,U(t)^*$.
 	In a similar way as in Eq. \eqref{eq deriv para teo 2sb}, 
 	using the previous block matrix representations and setting $A(t):=(A_{ij}(t))_{i,j=1}^2$, 
 	we have that
 	\begin{equation} \label{eq deriv para teo 2sb2}
 		\gamma'(t)= X^*\,(A(t)\,U'(0)-U'(0)\,A(t))\,X=2\ \text{Re}(\,A_{12}(t) \,D_{\Theta})\,.
 	\end{equation}
 	Now, using the fact that $\cX$ is an $A$-invariant subspace(so that $A$ has the block representation in Eq. \eqref{eq agregada 123}), if we consider the block matrix representation $A(t)=U(t)^*\,A\,U(t)=(A_{ij}(t))_{i,j=1}^2$ then, for $t\in [0,1]$
 	\begin{equation}\label{eqqueseyocuanto111}
 		A_{12}(t)=S(t)\, A_{22}\,  C(t)-C(t)\,  A_{11}\,  S(t)\, .
 	\end{equation} 
 	
 		Equation \eqref{eqqueseyocuanto111} 
 		shows that $\cX$ might not be $A(t)$-invariant, for $t\in (0,1]$; nevertheless, based on the fact that $\cX$ is $A$-invariant, we show that it is possible to obtain a convenient upper bound for the growth of the singular values $\gamma'(t)$, for $t\in(0,1]$.
 	Indeed, using Eq. \eqref{eqqueseyocuanto111}
 	we see that 
 	$$
 	2\, \text{Re}(A_{12}(t)\ D_{\Theta})= 2\, \text{Re}(\, (S(t)\, A_{22}\,  C(t)-C(t)\,  A_{11}\,  S(t)\,) \, D_{\Theta})
 	$$
 	Moreover, since $S(t),\, C(t)$ and $D_{\Theta}$ commute with each other, then
 	\begin{equation} \label{eq la pinta de la deriv cas invariant}
 		2\, \text{Re}(A_{12}(t)\ D_{\Theta})= 2\, \text{Re}(\, S(t)(A_{22}\, D_{\Theta} - D_{\Theta }\, A_{11})\, C(t)\,)\,.
 	\end{equation}
 	Using Weyl's multiplicative inequality (item 3 of Theorem \ref{teo ag}) 
 	and  that given $E\in\cM_k(\C)$, then	$s(\text{Re}(E))\prec_w s(E)$, 
 	we get  from Eqs. \eqref{eq deriv para teo 2sb2} and 
 	\eqref{eq la pinta de la deriv cas invariant} that 
 	\begin{equation}\label{eq una acotacot2} 
 		s(\gamma'(t)\,) = 2\, s\left(\,\text{Re}(A_{12}(t)\ D_{\Theta})\,\right)\prec_w 2\,
 		t \,\Theta\,  s( A_{22}\, D_{\Theta} - D_{\Theta}\, A_{11})\,,
 	\end{equation}
 	where we have used that $s(S(t)\,) = \la(S(t)\,) = \sin(t \,\Theta) \leqp t \,\Theta$ 
 	and that $0\leq C(t)\leq I$.
 	
 	\pausa We will also consider the following
 	inequality for the singular values of the so-called generalized commutators. 
 	
 	\begin{teo}[See \cite{MSZ2}]\label{teo gen Kitt} 
 		\rm Let $A_1,\, A_2\in \cH(k)$ be arbitrary and let $D\in\cM_k(\C)$. Then \rm
 		\begin{equation} 
 			s(A_1\,D-D\,A_2)\prec_w s(D) \,\, \sprm\,( A_1 \oplus A_2)\ . 
 		\end{equation}
 	\end{teo}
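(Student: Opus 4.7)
The statement generalizes Theorem \ref{vale con 2 gral}, and my plan is to reduce it to that theorem by a self-adjoint dilation together with a curve argument, in the spirit of Section \ref{subsec main2}.

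First I would reduce to the case $D = \Sigma = \operatorname{diag}(s(D))$ diagonal. Using the SVD $D = V \Sigma W^*$, the unitary conjugation of $A_1 \oplus A_2$ by $\operatorname{diag}(V, W) \in \cU(2k)$ leaves $\sprm(A_1 \oplus A_2)$ invariant, replaces $A_1, A_2$ by $V^*A_1 V$ and $W^*A_2 W$ (which is harmless for the bound), and leaves $s(A_1 D - D A_2)$ unchanged. Next, following the template of Eq. \eqref{eq defi udt}, I would introduce the smooth family
$$U(t) = \begin{pmatrix} \cos(t\,\Sigma) & -\sin(t\,\Sigma) \\ \sin(t\,\Sigma) & \cos(t\,\Sigma) \end{pmatrix} \in \cU(2k), \qquad t \in [0,1],$$
whose angular parameters are the singular values of $D$ themselves, and set $H(t) = U(t)^*(A_1 \oplus A_2)U(t) \in \cH(2k)$. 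Since $H(t)$ is unitarily conjugate to $A_1 \oplus A_2$, we have $\sprm(H(t)) = \sprm(A_1 \oplus A_2)$ for every $t$, and a direct block computation (mirroring Eq. \eqref{eqqueseyocuanto111}) yields
$$H_{12}(t) = \sin(t\,\Sigma)\,A_2\,\cos(t\,\Sigma) - \cos(t\,\Sigma)\,A_1\,\sin(t\,\Sigma).$$

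Applying Theorem \ref{vale con 2 gral} to $H(t)$ produces the uniform bound $2\,s(H_{12}(t)) \prec_w \sprm(A_1 \oplus A_2)$. On the other hand, since $\sin(t\,\Sigma)$, $\cos(t\,\Sigma)$ and $\Sigma$ pairwise commute, the factorization trick of Eq. \eqref{eq la pinta de la deriv cas invariant} rewrites $H_{12}(t)$ as $\sin(t\,\Sigma)\,(A_2 \,\ast\, \cdot\,\ast\, A_1)\,\cos(t\,\Sigma)$ with $\sin(t\,\Sigma)$ pulled out as a scalar-like factor. Combined with Weyl's multiplicative inequality and the estimate $s(\sin(t\,\Sigma)) = \sin(t\,s(D)) \leqp t\,s(D)$, this produces entrywise bounds on $s(H_{12}(t))$ scaled by $t\,s(D)$. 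Read against the first-order expansion $H_{12}(t) = t\,(\Sigma\,A_2 - A_1\,\Sigma) + O(t^3)$, one identifies $-(A_1 D - D A_2)$ as the leading coefficient, so the desired inequality should emerge after a careful $t \to 0^+$ limiting argument.

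The hard part will be making that limit rigorous: the bound from Theorem \ref{vale con 2 gral} has no $t$-dependence, so naively dividing both sides by $t$ sends the right-hand side to infinity. The honest strategy is therefore to combine the factorization with the spread bound only after extracting the correct $t$-scale — for example by applying Theorem \ref{vale con 2 gral} to a modified block matrix where $\Sigma$ itself plays the role of the off-diagonal block, or by integrating $s(\gamma'(t))$ against $t$ via Proposition \ref{prop estimando distancias1} on a curve whose endpoint encodes $A_1 D - D A_2$. Tracking the multiplicative factor $s(D)$ entry-by-entry, as opposed to a mere scalar $\|D\|$, is the conceptually delicate step, and it is the exact analogue of how the principal angles $\Theta$ enter Eq. \eqref{eq una acotacot2} in the proof of Theorem \ref{teo principal2}.
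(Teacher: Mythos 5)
There is a genuine gap here, and you essentially acknowledge it yourself. Note first that the paper does not prove Theorem \ref{teo gen Kitt} at all: it is imported verbatim from \cite{MSZ2}, so there is no internal argument to compare against, and your attempt has to stand on its own. Your SVD reduction to $D=\Sigma=\mathrm{diag}(s(D))$ is correct, and the computation of $H_{12}(t)=\sin(t\Sigma)A_2\cos(t\Sigma)-\cos(t\Sigma)A_1\sin(t\Sigma)$ is right. But the two tools you then invoke cannot be combined into a proof. Theorem \ref{vale con 2 gral} gives $2\,s(H_{12}(t))\prec_w \sprm(A_1\oplus A_2)$, a bound that is independent of $t$ and contains no factor of $s(D)$; since $H_{12}(t)=t(\Sigma A_2-A_1\Sigma)+O(t^3)$ vanishes to first order, dividing by $t$ and letting $t\to 0^+$ destroys the right-hand side, exactly as you observe. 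The factorization trick of Eq. \eqref{eq la pinta de la deriv cas invariant} does not rescue this: in $H_{12}(t)$ the factor $\sin(t\Sigma)$ sits on the \emph{left} of one term and on the \emph{right} of the other, and there is no outer $\mathrm{Re}(\,\cdot\,D_\Theta)$ structure to let you commute it through $A_1$; the identity used in the paper relies on $D_\Theta$ commuting with $S(t)$ and $C(t)$ and on taking real parts, neither of which is available here. Worse, even if you could extract $\sin(t\Sigma)$ as a scalar-like factor $t\,s(D)$ via Weyl's multiplicative inequality, what would remain to be bounded by $\sprm(A_1\oplus A_2)$ is precisely a generalized commutator of the form $A_2\,E-E\,A_1$ --- that is, the statement you are trying to prove. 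Your sketch thus replays the machinery surrounding Eq. \eqref{eq una acotacot2} in the proof of Theorem \ref{teo principal2}, for which Theorem \ref{teo gen Kitt} is an \emph{input}, and leaves the actual content of the theorem unestablished.

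Your closing suggestions (a modified block matrix with $\Sigma$ as off-diagonal block, or an integral of $s(\gamma'(t))$ along a curve ending at $A_1D-DA_2$) are not developed enough to assess, but as stated neither produces the entrywise product $s(D)\,\sprm(A_1\oplus A_2)$; Proposition \ref{prop estimando distancias1} only converts an endpoint difference into an integral of $s(\gamma'(t))$, and you would still need a pointwise bound on $s(\gamma'(t))$ of exactly the commutator type in question. A workable elementary route (closer to what \cite{MSZ2} actually does) is to handle the special case where $D$ is an orthogonal projection first --- there Theorem \ref{vale con 2 gral}, applied after a $45$-degree rotation mixing the two diagonal blocks, does yield the bound --- and then write a general nonnegative diagonal $\Sigma$ as a positive combination $\Sigma=\sum_j(\sigma_j-\sigma_{j+1})P_j$ of nested projections, controlling the resulting sum of generalized commutators by submajorization. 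As it stands, your proposal is a plan with the decisive step missing, not a proof.
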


 	\begin{pro}\label{pro para teo 3.2}
 		With the notation and terminology of Notation \ref{Not 3.3}, assume that $\cX\cap \cY=\{0\}$.
 		Then we have that 
 		\begin{equation}\label{eq teo main2bis}
 			s(X^*\,A\,X-Y_r^*\,A\,Y_r )\prec_w \Theta^2 \, \sprm \, A\,,
 		\end{equation} 
 	\end{pro}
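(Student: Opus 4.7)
The plan is to complete the argument already set up in Notation \ref{Not 3.3}, Eqs. \eqref{eq defi yrdt2}--\eqref{eq una acotacot2}, by applying Theorem \ref{teo gen Kitt} to the generalized commutator $A_{22}D_\Theta - D_\Theta A_{11}$ and then integrating, essentially as in the proof of Theorem \ref{teo principal1}. The crucial point is that here $A$-invariance of $\cX$ makes $A$ block diagonal with respect to $\C^d=\cX\oplus\cX^\perp$, which causes the linear factor of $t$ to appear twice in the estimate for $s(\gamma'(t))$ and thus yields the quadratic $\Theta^2$ after integration.

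First, I note that the curve $\gamma:[0,1]\to \cH(k)$ defined in \eqref{eq defi gamma posta4} is smooth with $\gamma(0)=X^*AX$ and $\gamma(1)=Y_r^*AY_r$, so by Proposition \ref{prop estimando distancias1},
\begin{equation*}
s(X^*AX - Y_r^*AY_r) \prec_w \int_0^1 s(\gamma'(t))\, dt.
\end{equation*}
By Eq. \eqref{eq una acotacot2}, already established under the current hypotheses, we have
\begin{equation*}
s(\gamma'(t)) \prec_w 2\, t\, \Theta\; s(A_{22}\, D_\Theta - D_\Theta\, A_{11}) \quad \text{for } t\in[0,1].
\end{equation*}

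Next I would apply Theorem \ref{teo gen Kitt} with $A_1=A_{22}$, $A_2=A_{11}$ and $D=D_\Theta$, obtaining
\begin{equation*}
s(A_{22}\, D_\Theta - D_\Theta\, A_{11}) \prec_w s(D_\Theta)\, \sprm(A_{22}\oplus A_{11}) = \Theta\, \sprm(A),
\end{equation*}
where the last equality uses that $\cX$ is $A$-invariant so that $A=A_{11}\oplus A_{22}$ (Eq. \eqref{eq agregada 123}) is unitarily equivalent to $A_{22}\oplus A_{11}$, and that $\sprm$ is invariant under unitary conjugation. Combining this with the previous estimate via the multiplicativity of submajorization with a non-negative decreasing weight (the $\prec_w$-version in Lemma \ref{lem submaj props1}) gives
\begin{equation*}
s(\gamma'(t)) \prec_w 2\, t\, \Theta^2\, \sprm(A) \quad \text{for } t\in[0,1].
\end{equation*}

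Finally, integrating in $t$ and using the integral-preserving properties of weak majorization (again Lemma \ref{lem submaj props1}) I obtain
\begin{equation*}
\int_0^1 s(\gamma'(t))\, dt \prec_w \left(\int_0^1 2t\, dt\right)\, \Theta^2\, \sprm(A) = \Theta^2\, \sprm(A),
\end{equation*}
which together with the first displayed inequality proves \eqref{eq teo main2bis}. The only potentially delicate step is verifying that the vector products $\Theta \cdot s(A_{22}D_\Theta-D_\Theta A_{11})$ and $\Theta \cdot (\Theta\,\sprm(A))$ are compatible with the convention in Remark \ref{rem acuerdos} and with the submajorization preservation under entrywise multiplication by the non-increasing vector $\Theta$; this is standard but must be invoked carefully because $\Theta$ is arranged non-increasingly while $s(\cdot)$ is automatically so.
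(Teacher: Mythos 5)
Your argument reproduces the paper's proof of this proposition, but only for part of the stated hypotheses, and the omission is not cosmetic. The chain \eqref{eq defi yrdt2}--\eqref{eq una acotacot2} that you invoke as ``already established under the current hypotheses'' is derived in the text under the assumption that \emph{both} $\cX\cap\cY=\{0\}$ \emph{and} $\cX^\perp\cap\cY^\perp=\{0\}$: the $2\times 2$ block form of $U(t)$ with square blocks $C(t),S(t)$ acting between $\cX$ and $\cX^\perp$, and hence the identity \eqref{eqqueseyocuanto111} and the bound \eqref{eq una acotacot2}, require $\dim\cX^\perp=k$, i.e.\ $d=2k$. The proposition, however, assumes only $\cX\cap\cY=\{0\}$. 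Whenever $d>2k$ one has $\dim(\cX^\perp\cap\cY^\perp)\geq d-2k>0$ automatically, so the case you have not treated is the generic one --- and it is exactly the case needed later in Proposition \ref{para teo 3.2. un poco mas}, which concerns $k\leq d/2$.

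The missing step is the reduction the paper performs (and sketches, referring back to the end of the proof of Theorem \ref{teo principal1}): compress to $\cZ=\cX+\cY$, which has dimension $2k$ since $\cX\cap\cY=\{0\}$. Taking $Z\in\cI_\cZ(2k,d)$, set $A'=Z^*AZ$, $X'=Z^*X$, $Y_r'=Z^*Y_r$; one checks that $\cX'=R(X')$ is $A'$-invariant (because $A\cX\subseteq\cX\subseteq\cZ$), that $(\cX')^\perp\cap(\cY')^\perp=\{0\}$ inside $\C^{2k}$, that the principal angles are unchanged, that $(X')^*A'X'-(Y_r')^*A'Y_r'=X^*AX-Y_r^*AY_r$, and that $\sprm(A')\prec_w\sprm(A)$ by interlacing, so that $\Theta^2\,\sprm(A')\prec_w\Theta^2\,\sprm(A)$ by item 5 of Lemma \ref{lem submaj props1}. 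With that reduction added, the rest of your argument (Proposition \ref{prop estimando distancias1}, Eq.\ \eqref{eq una acotacot2}, Theorem \ref{teo gen Kitt}, and integration via Lemma \ref{lem submaj props1}) is correct and coincides with the paper's proof.
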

 	\pausa{\it Proof.} We first assume that $\cX^\perp\cap \cY^\perp=\{0\}$. Under this hypothesis,
 	we consider the notation and terminology introduced previously in this section.
 	In particular, we consider the smooth curve $\gamma(t)$ for $t\in [0,1]$ introduced in Eq. \eqref{eq defi gamma posta4}. Since $\gamma(0)=X^*AX$ and $\gamma(1)=Y_r^*\,A\,Y_r$ then, Proposition \ref{prop estimando distancias1} shows that 
 	\begin{equation} \label{eq cota para teo princ 2}
 		s(X^*AX-Y_r^*\,A\,Y_r)\prec_w\int_0^1 s(\gamma'(t))\ dt\,.
 	\end{equation} 
 	By Eq. 
 	\eqref{eq una acotacot2} above, 
 	we have that
 	\begin{equation}\label{eq falta poco1}
 		s(\gamma'(t))\prec_w 2\, t \,\Theta\,  s( A_{22}\, D_{\Theta} - D_{\Theta}\, A_{11})\,,
 	\end{equation}
 	By Theorem \ref{teo gen Kitt} we get 
 	\begin{equation}\label{eq una acotacot2b}
 		s( A_{22}\, D_{\Theta} - D_{\Theta}\, A_{11})\prec_w \Theta\,  \,\sprm(A_{11}\oplus A_{22})=\Theta\,  \,\sprm(A)\,,
 	\end{equation} since $A=A_{11}\oplus A_{22}$ by hypothesis ($\cX$ is $A$-invariant).
 	Therefore, by Eqs.  \eqref{eq falta poco1} and 
 	\eqref{eq una acotacot2b}, 
 	$$
 	s(\gamma'(t))\prec_w 2\, t\, \Theta^2\,  \,\sprm(A)
 	\stackrel{\eqref{eq cota para teo princ 2}}{\implies}
 	s(X^*\,A\,X-Y_r^*\,A\,Y_r )\prec_w \Theta^2 \, \sprm \, A
 	\ , 
 	$$
 	where we have used items 5. and 3. in Lemma \ref{lem submaj props1}. 
 	Finally, in case $\cX^\perp\cap \cY^\perp\neq \{0\}$ we can argue by reduction to the case in which $\cX^\perp\cap \cY^\perp= \{0\}$, in the same way as in the last part of the proof of Theorem \ref{teo principal1}, and prove the inequality in Eq. \eqref{eq teo main2bis} (without the restriction $\cX^\perp \cap\cY^\perp=\{0\}$). The details are left to the reader. 
 	\QED
 	
 	\smallskip
 	
 	\pausa
 	\begin{pro}\label{para teo 3.2. un poco mas}
 		With the notation and terminology of Notation \ref{Not 3.3}, assume that 
 		$\cX,\,\cY\subset \C^d$ be $k$-dimensional subspaces with $k\leq d/2$. 
 		Then we have that 
 		\begin{equation}\label{eq teo main2bisbis}
 			s(X^*\,A\,X-Y_r^*\,A\,Y_r )\prec_w \Theta^2 \, \sprm \, A\,,
 		\end{equation} 
 	\end{pro}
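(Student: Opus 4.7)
My plan is to reduce this to Proposition~\ref{pro para teo 3.2} (which handles the case $\cX\cap\cY=\{0\}$) by an approximation argument, using the hypothesis $k\leq d/2$ to produce, for small $\epsilon>0$, a subspace $\cY_\epsilon$ close to $\cY$ with $\cX\cap\cY_\epsilon=\{0\}$.

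\pausa Set $s=\dim(\cX\cap\cY)$. If $s=0$ the claim is immediate from Proposition~\ref{pro para teo 3.2}, so assume $s>0$ and write $\cW=\cX\cap\cY$ and $\cY_1=\cY\ominus\cW$. Fix an ONB $\{e_1,\dots,e_s\}$ of $\cW$. The first step is to exhibit orthonormal vectors $f_1,\dots,f_s\in\cX^\perp$ orthogonal to $P_{\cX^\perp}(\cY_1)$: this is possible because
$$
\dim\bigl(\cX^\perp\ominus P_{\cX^\perp}(\cY_1)\bigr)\geq (d-k)-(k-s)=d-2k+s\geq s,
$$
the last inequality being precisely the hypothesis $k\leq d/2$. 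For $\epsilon\in(0,\pi/2)$ I then define
$$
\cW^\epsilon=\gen\{\cos(\epsilon)\,e_j+\sin(\epsilon)\,f_j:1\leq j\leq s\}\py \cY_\epsilon=\cW^\epsilon\oplus\cY_1 \ .
$$
Testing a hypothetical $v=w+y_1\in\cX\cap\cY_\epsilon$ by pairing with each $f_k$ (using $f_k\perp P_{\cX^\perp}\cY_1$) and then invoking $\cX\cap\cY_1=\{0\}$ quickly yields $\cX\cap\cY_\epsilon=\{0\}$ for every such $\epsilon$.

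\pausa Now let $U_\epsilon$ be the direct rotation from $\cX$ onto $\cY_\epsilon$ and set $Y_r^\epsilon=U_\epsilon X\in\cI_{\cY_\epsilon}(k,d)$. Since $\cX$ is $A$-invariant and $\cX\cap\cY_\epsilon=\{0\}$, Proposition~\ref{pro para teo 3.2} applies to the triple $(A,\cX,\cY_\epsilon)$ and gives
$$
s\bigl(X^*AX-(Y_r^\epsilon)^*AY_r^\epsilon\bigr)\prec_w \Theta(\cX,\cY_\epsilon)^2\,\sprm(A) \fe \epsilon\in(0,\pi/2) \ .
$$
As $\epsilon\to 0^+$, $\cY_\epsilon\to \cY$ in the Grassmannian; since the direct rotation and principal angles depend continuously on the subspaces, $U_\epsilon\to U$, hence $Y_r^\epsilon\to Y_r$, and $\Theta(\cX,\cY_\epsilon)\to \Theta$. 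Singular values are continuous in the matrix entries and submajorization is closed under componentwise limits of real vectors, so passing to the limit yields the desired bound $s(X^*AX-Y_r^*AY_r)\prec_w \Theta^2\,\sprm(A)$.

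\pausa The main obstacle is the construction of the perturbation $\cY_\epsilon$: one needs enough room in $\cX^\perp$ both to tilt the common part $\cW$ off $\cX$ and to avoid collapsing onto the directions already occupied by $\cY_1$, and this is exactly what the hypothesis $k\leq d/2$ guarantees through the inequality $d-2k+s\geq s$. Once this room is secured, the continuity statements for direct rotations and principal angles (both standard, cf.\ \cite{DavKah}) make the passage to the limit essentially automatic.
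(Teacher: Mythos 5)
Your proposal follows essentially the same strategy as the paper: use $k\le d/2$ to get the dimension count $d-2k+s\ge s$, tilt the common part $\cX\cap\cY$ by an angle $\epsilon$ into a piece of $\cX^\perp$ orthogonal to the rest of $\cY$, so as to produce subspaces $\cY_\epsilon$ with $\cX\cap\cY_\epsilon=\{0\}$ and $\cY_\epsilon\to\cY$, apply Proposition \ref{pro para teo 3.2}, and pass to the limit using continuity of singular values and of principal angles together with the closedness of $\prec_w$ under componentwise limits. The paper tilts into $\cX^\perp\cap\cY^\perp$ (whose dimension is exactly $d-2k+s$); your target $\cX^\perp\ominus P_{\cX^\perp}(\cY_1)$ contains that subspace, and your verifications that $\cW^\epsilon\perp\cY_1$ and $\cX\cap\cY_\epsilon=\{0\}$ are correct.

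The one step you cannot dispatch by citing ``standard continuity'' is $U_\epsilon\to U$. When $\cX\cap\cY\neq\{0\}$ (and likewise when angles equal to $\pi/2$ occur) the direct rotation from $\cX$ onto $\cY$ is \emph{not unique}: the block form \eqref{eq defi rotacion directa prelis} depends on a choice of compatible orthonormal bases, so ``the direct rotation depends continuously on the subspaces'' is not even a well-posed assertion, and the proposition has to be proved for the \emph{given} $U$ of Notation \ref{Not 3.3}. What is actually required is the existence of a continuous selection $\epsilon\mapsto U_\epsilon$ of direct rotations from $\cX$ onto $\cY_\epsilon$ with $U_\epsilon\to U$. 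This is exactly the ``Claim'' in the paper's proof, and it is settled there by an explicit block construction: the tilting vectors are taken from the very ONB $\cB$ that realizes the block form of $U$, and $W(t)$ is written down concretely (Eq. \eqref{eq defi wdet}) so that $W(0)=U$ by inspection. Your argument becomes complete once you supply such a construction; with your more general choice of the $f_j$ the same kind of adapted basis and explicit block form for $U_\epsilon$ would still have to be exhibited.
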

 	
 	\pausa{\it Proof}. Assume that $k=\dim\cX=\dim\cY\leq d/2$ and that $\cX$ and $\cY$ are arbitrary subspaces such that $\cX$ is $A$-invariant. If $\cX\cap \cY=\{0\}$ then we conclude that Eq. \eqref{eq teo main2bisbis} holds in this case, by Proposition \ref{pro para teo 3.2}. In case 
 	$\cX\cap \cY\neq\{0\}$ we consider the following
 	
 	\pausa {\it Claim}: for every $t\in [0,1]$ there exist a $k$-dimensional subspace $\cY(t)\subset \C^d$ and a direct rotation $W(t)\in\cU(d)$ from $\cX$ onto $\cY(t)$ in such a way that $\cX\cap\cY(t)=\{0\}$ for $t\in(0,1]$,
 		$$
 		\lim_{t\rightarrow 0^+}P_{\cY(t)}=P_\cY\in\cH(d) \py \lim_{t\rightarrow 0^+}W(t)=U\,.
 		$$
 		Let us assume for a moment that the claim is true. Let $Y_{r,t}=W(t)\,X$, for $t\in [0,1]$, and notice that $\lim_{t\rightarrow 0^+}Y_{r,t}=U\,X=Y_r$ (as in the statement of the theorem). Since $\cX\cap \cY(t)=\{0\}$ for $t\in (0,1]$, by Proposition \ref{pro para teo 3.2} we have that 
 		$$
 		s(X^*AX-Y_{r,t}^*\,A\,Y_{r,t})\prec_w  \Theta(\cX,\,\cY(t))^2\, \sprm(A) \peso{for} t\in (0,1]\,.
 		$$ By continuity of singular values (and hence of principal angles) we conclude that 
 		\begin{eqnarray*}
 			s(X^*AX-Y_{r}^*AY_{r})&=&\lim_{t\rightarrow 0^+} s(X^*AX-Y_{r,\,t}^*\,A\,Y_{r,\,t})\prec_w  \lim_{t\rightarrow 0^+} \Theta(\cX,\,\cY(t))^2\, \sprm(A)\\ & =&  \Theta(\cX,\,\cY)^2\, \sprm(A)\,. 
 		\end{eqnarray*}
 		{\it Proof of the claim}.  In case $\dim(\cX\cap \cY)=s\geq 1$, we notice that $$\dim \cX^\perp\cap\cY^\perp=\dim\cX^\perp+\dim\cY^\perp-\dim (\cX\cap\cY)^\perp\geq s\,.$$ On the other hand, we have the decompositions
 	$$
 	\cX=(\cX\cap \cY) \oplus (\cX\cap \cY^\perp)\oplus(\cX\cap \cG) \py \cY=(\cX\cap \cY) \oplus (\cX^\perp\cap \cY)\oplus (\cY\cap \cG) \,,
 	$$ where $\cG$ denotes the generic part of the pair of subspaces $\cX$ and $\cY$ (see Eq. \eqref{eq defi part gen}).
 	Let $\Theta'=(\frac{\pi}{2}\,\uno_p\coma \theta_1\,\ldots,\theta_r)$ denote the principal angles between $\cS_1:=(\cX\cap \cY^\perp)\oplus(\cX\cap \cG)$ and $(\cX^\perp\cap \cY)\oplus (\cY\cap \cG)$, where $p=\dim \cX\cap \cY^\perp=\dim\cX^\perp\cap \cY$ and $r=\dim \cX\cap \cG=\dim \cY\cap \cG$; then, $\Theta=\Theta(\cX\coma \cY)= (\Theta ' \coma 0_{s})\in[0,\pi/2]^k$, where $k=p+r+s$.
 	
 	\pausa Let $U\in\cU(d)$ be a direct rotation from $\cX$ onto $\cY$. Then there exists $\cB$ an ONB  for $\C^d$, compatible with the decomposition 
 	\begin{equation} \label{eq decom cd sec4}
 		\C^d=(\cX\cap \cY) \oplus \cS_1 \oplus \cS_2\oplus(\cX^\perp\cap \cY^\perp)\,,
 	\end{equation} where $\cS_2=(\cX^\perp\cap \cY)\oplus (\cX^\perp\cap \cG)$, 
 	such that 
 	\begin{equation}\label{eq defi dir rot u2}
 		U=\begin{pmatrix}
 			I&0 &0 & 0\\
 			0&C&-S & 0\\
 			0&S&C& 0\\
 			0&0 &0 & I
 		\end{pmatrix}\,,\end{equation}  
 	$$\peso{where} C=\text{diag}(\,\cos(\Theta')\,)\py S=\text{diag}(\,\sin(\Theta')\,)\in\cM_{p+r}(\C)^+ \,$$ 
 	are diagonal positive semidefinite matrices. 
 	
 	\pausa Let $\{v_j\}_{j\in\I_s}$ be the vectors in $\cB$ that form an ONB of $\cX\cap \cY$ and let $\{w_j\}_{j\in\I_s}$ be vectors in $\cB$ that form  an ONS in $\cX^\perp\cap\cY^\perp$ (here  we are using that $\dim \cX^\perp\cap\cY^\perp\geq s$). We now set
 	$u_j(t)=\cos(t) \, v_j+\sin(t) \, w_j$, for $j\in\I_s$, and let $\cS(t)$ be the subspace generated by the ONS $\{u_j(t)\}_{j\in\I_s}$, for $t\in [0,1]$. Also, let 
 	$$ \cY(t)=\cS(t)\oplus (\cX^\perp\cap \cY)\oplus (\cY\cap \cG)\peso{for} t\in [0,1]\,.$$
 	By construction, $\dim\cY(t)=k$, and for $0<t\leq \theta_r$ we have that 
 	$$\Theta(\cX,\cY(t))=(\theta_j(t))_{j\in\I_k}=(\Theta ' \coma t\,\uno_{s})\in (0,\pi/2]^k\,.$$
 	In particular, $\cX\cap \cY(t)=\{0\}$ for $0<t<\theta_r$. It is clear that $\lim_{t\rightarrow 0^+}P_{\cY(t)}=P_\cY$.

 	\pausa  Consider the orthogonal decomposition 
 	\begin{equation}\label{eq decom con zetas}
 		\C^d=\cZ_1\oplus\cZ_2\oplus \cT\end{equation} where $\cT=(\cX^\perp\cap \cY^\perp)\ominus \cS(\frac{\pi}{2})$,
 	$$\cZ_1 = (\cX\cap \cY) \oplus(\cX\cap \cY^\perp)\oplus(\cX\cap \cG) \py \cZ_2=(\cX^\perp\cap \cY)\oplus(\cX^\perp\cap \cG)\oplus \cS(\frac{\pi}{2}) \,. $$ 
 	We now construct the block matrix $W(t)$ in terms of the decomposition in Eq. \eqref{eq decom con zetas}, given by
 	\begin{equation} \label{eq defi wdet}
 		W(t)=\begin{pmatrix}
 			C_1(t) & -S_1(t) & 0\\ 
 			S_1(t) & C_1(t) &0 \\
 			0& 0 & I
 		\end{pmatrix}\peso{for} t\in [0,1]\,,
 	\end{equation}
 	where $C_1(t)=\text{diag}(\cos(t)\, \uno_s ,\,\cos(\Theta'))$ and 
 	$S_1(t)=\text{diag}(\sin(t)\, \uno_s ,\, \sin(\Theta'))$. 
 	Then, $W(\cdot)$ is a continuous function such that $W(t)$ is a direct rotation from $\cX$ onto $\cY(t)$, for $t\in [0,1]$, and such that $W(0)=U$ (compare Eqs. \eqref{eq defi dir rot u2} and \eqref{eq defi wdet}).
 	The claim follows from these remarks.
 	\QED
 	
 	\pausa We can now prove our second main result. We will re-write the statement 
 	with all its notation and terminology for the reader's convenience.

 	\pausa
 	{\bf Theorem \ref{teo principal2}.}
 	Let $A\in\cH(d)$ and let $\cX,\,\cY\subset \C^d$ be $k$-dimensional subspaces such that $\cX$ is $A$-invariant. 
 	Let $U=U(\cX,\cY)$ be a direct rotation of $\cX$ onto $\cY$ and $\Theta=\Theta(\cX,\cY)\da  \in [0,\pi/2]^k$ 
 	the principal angles between $\cX$ and $\cY$ defined in \eqref{theta}. 
 	Given $X\in\cI_\cX(k,d)$, if we let $Y_r=Y_r(X,U)=UX\in \cI_\cY(k,d)$ then we have that 
 	\begin{equation}\label{eq teo main2bisbisbis}
 		s(X^*\,A\,X-Y_r^*\,A\,Y_r )\prec_w \Theta^2(\cX,\cY)\da \  \sprm \, (A)\,,
 	\end{equation} 
 	where submajorization and products are as in Remark \ref{rem acuerdos}. 
 	
 	\pausa{\it Proof}. 
 	In case $k\leq d/2$, the result follows from Proposition \ref{para teo 3.2. un poco mas}. On the other hand, if $k> d/2$ we embed the subspaces $\cX$, $\cY$ in $\C^{d'}=\C^d\oplus \C^{(d'-d)}$ for some $k\leq d\,'/2$ and get
 	$\cX'=\cX\oplus 0_{(d'-d)}$ and $\cY'=\cY\oplus 0_{(d'-d)}$. Notice that in this case $\Theta(\cX',\cY')=(\Theta(\cX,\cY),0_{(d'-d)})$.
 	
 	\pausa 
 	Similarly, we can embed $X$, $A$ and $U$  and get 
 	$X'=
 	\begin{pmatrix}
 		X \\ 0
 	\end{pmatrix} \in \cI_{\cX'}(k,d\,') $ , 
 	$$A'=\begin{pmatrix}
 		A & 0 \\ 
 		0 & \la_h(A)\,I 
 	\end{pmatrix}\in \cH(d\,')
 	\py 
 	U'=\begin{pmatrix}
 		U & 0 \\ 
 		0 & I 
 	\end{pmatrix}\in \cU(d\,')
 	\,,$$ where $h = [\frac {d+1}{2}]$ (integer part). 
 	In this case $U'$ is a direct rotation of $\cX'$ onto $\cY'$ such that 
 	$$Y_r'= U'X'=\begin{pmatrix} 
 		UX \\ 0
 	\end{pmatrix}=
 	\begin{pmatrix} 
 		Y_r \\ 0
 	\end{pmatrix}\,.$$
 	Now it is straightforward to check that 
 	$$
 	(X')^*A'\,X'-(Y_r')^*A'\,Y_r'=\begin{pmatrix} 
 		X^*AX-Y_r^*AY_r \\ 0
 	\end{pmatrix}\,.
 	$$Hence, by Proposition \ref{para teo 3.2. un poco mas} we get that 
 	$$s(X^*AX-Y_r^*AY_r )=s((X')^*A'\,X'-(Y_r')^*A'\,Y_r')\prec_w \Theta(\cX',\cY')^2 \, \sprm (A')\,. $$
 	Notice that, by construction, $$\sprm (A')=(\sprm (A), 0_{h'}) \implies \Theta(\cX',\cY')^2 \, \sprm (A')=
 	\Theta(\cX,\cY)^2 \, \sprm (A)\ ,$$ where $h'= [\frac{d'}{2}]-[\frac{d}{2}]\geq 1$.
 	\QED

 		\begin{rem}[Final comments]\label{rem coments finales}\rm
 			Consider the notation of Theorem \ref{teo applica1}. The reader could wonder why is it that our bounds do not coincide with the bounds in Eqs. \eqref{eq conj KA 1} and \eqref{eq conj KA 2}. Now that we have described our techniques in detail we can give our opinion on this issue. We believe that 
 			the fact that our bounds depend on the $\Theta(\cX,\cY)$, which from a geometric point of view corresponds to the arc length between subspaces, is a consequence of our choice of curves that lead to the construction of $\gamma(t)$ as in Eqs. \eqref{eq defi gamma posta1} and  \eqref{eq defi gamma posta4}, based on direct rotations. On the other hand $\sin(\Theta(\cX,\cY))$, which from a geometric point of view corresponds to the chordal length between subspaces, seems to be associated with shorter curves $\tilde \gamma(t)$. Still, the importance of our choice $\gamma(t)$ is that its derivative $\gamma'(t)$ leads to singular values inequalities that allow to reduce the problem to the infinitesimal level (these inequalities where obtained in \cite{MSZ2}, motivated by the problems in the present paper). For example, notice that the shortest curve $\mu(t)=(1-t)X^*AX-t Y^*AY$, $t\in [0,1]$, joining $X^*AX$ and $Y^*AY$ does not provide such reduction, since $\mu'(t)=Y^*AY-X^*AX$, for $t\in[0,1]$. Nevertheless, the geometric technique considered in here is rather flexible in several ways, and we plan to keep working in these problems in the future. \EOE
 	\end{rem}
 	
 	\section{Appendix}\label{appendix}

 	Here we collect several results about majorization and the spectral spread of self-adjoint matrices, used throughout our work.

 	\subsection{Majorization theory in matrix analysis}\label{subsec append mayo}
 	
 	There are many fundamental results in matrix theory that are stated in terms of submajorization relations. Below, we mention only those results related to the content of this work; for a detailed exposition on majorization theory see \cite[Chapters II and III]{bhatia}, 
 	\cite[Chapter 3]{HJ} and \cite[Chapter 9]{MaOlAr}. 
 	
 	\pausa
 	The first result deals with submajorization relations between singular values of arbitrary matrices in $\mat$ (see \cite{bhatia} p.\,35, p.\,74 and p.\,94).
 	
 	\begin{teo}\label{teo ag}\rm Let $C,\,D\in \cM_d(\C)$. Then,
 		\ben
 		\item Weyl's additive inequality for singular values : $s(C+D)\prec_w s(C)+s(D)$;
 		\item $s(\text{Re}(C))\prec_w s(C)$ ;
 		\item Weyl's multiplicative inequality for singular values: $s(CD)\prec_w s(C)\, s(D)$;
 		\een\QED
 	\end{teo}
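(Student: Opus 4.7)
The plan is to treat the three inequalities as instances of the Ky Fan $k$-norm principle plus, for part (3), a passage through log-submajorization via compound (antisymmetric tensor) matrices. These are classical results, so my strategy is essentially the one in \cite{bhatia}, organized around a single variational object.

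First, for part (1), I would establish the Ky Fan variational formula
\begin{equation*}
\sum_{i=1}^{k} s_i(C) \;=\; \sup\bigl\{\, |\Tr(V^* C U)| : U, V \in \cI(k,d) \,\bigr\},
\end{equation*}
for each $k \in \I_d$. The $\leq$ direction comes from singular value decomposition by choosing $U, V$ to be the leading singular vectors; the $\geq$ direction follows because any $V^* C U$ is a $k \times k$ contraction of $C$, whose trace is bounded by the sum of its singular values, which in turn are dominated entry-wise by those of $C$ through interlacing. Once this is in hand, the right-hand side is manifestly subadditive in $C$, so
\begin{equation*}
\sum_{i=1}^{k} s_i(C+D) \;\leq\; \sum_{i=1}^{k} s_i(C) + \sum_{i=1}^{k} s_i(D) \;=\; \sum_{i=1}^{k} \bigl(s(C) + s(D)\bigr)^{\downarrow}_i,
\end{equation*}
which is exactly $s(C+D) \prec_w s(C) + s(D)$ (the rearrangement inequality giving the last equality).

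For part (2), I would simply write $\Preal(C) = \tfrac12(C + C^*)$ and apply (1) together with the identity $s(C^*) = s(C)$, obtaining
\begin{equation*}
s(\Preal(C)) \;\prec_w\; \tfrac12\bigl( s(C) + s(C^*) \bigr) \;=\; s(C).
\end{equation*}
Here I use that $\prec_w$ is preserved under nonnegative scalar multiples and that adding $s(C)$ to itself componentwise just doubles it.

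The main obstacle is part (3), since the inequality asserts something about the \emph{entrywise} product of the ordered singular value vectors, not merely the leading one. My plan is to reduce to log-submajorization via the $k$-th compound matrix $\bigwedge^k M$, which satisfies $\bigwedge^k(CD) = (\bigwedge^k C)(\bigwedge^k D)$ and whose operator norm equals $\prod_{i=1}^k s_i(\cdot)$. Submultiplicativity of the operator norm then yields Horn's inequality
\begin{equation*}
\prod_{i=1}^{k} s_i(CD) \;\leq\; \prod_{i=1}^{k} s_i(C)\, s_i(D), \qquad k \in \I_d,
\end{equation*}
i.e.\ $s(CD) \prec_{w\log} s(C) \, s(D)$. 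Finally I would invoke the elementary fact that for nonnegative vectors, log-submajorization implies weak submajorization: this follows from the observation that $\log$ is concave and $\exp$ is convex and increasing, so applying $\exp$ to the partial-sum inequalities for $\log$ and then using the increasing rearrangement preserves the ordering; equivalently, one can argue by summing Horn's inequality against the maximal subsets and appealing to the standard lemma that $\prod x_i \leq \prod y_i$ for every prefix of ordered nonnegative vectors implies $\sum x_i \leq \sum y_i$ for every prefix. This last implication is the only subtle step and is where I would be most careful.
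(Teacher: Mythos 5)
The paper offers no proof of this theorem: it is quoted as a classical result with a pointer to Bhatia's \emph{Matrix Analysis} (pp.~35, 74, 94). Your reconstruction --- Ky Fan's variational principle for the additive inequality, the Cartesian decomposition $\mathrm{Re}(C)=\tfrac12(C+C^*)$ for item 2, and Horn's inequality via compound matrices followed by the standard passage from log-submajorization to weak submajorization for item 3 --- is correct and is essentially the argument given in that reference.
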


 	\begin{teo}\label{teo ah}(\cite{bhatia}, Thm.III.4.1) \rm Let $C,\, D\in \matsad$. Then,
 		\ben
 		\item Lidskii's additive inequality: $\lambda(C)-\lambda(D)\prec \lambda(C-D)$;
 		\item Lidskii's additive inequality for singular values: $|\la(C)-\la(D)|\prec_w s(C-D)$;
 		\QED\een 
 	\end{teo}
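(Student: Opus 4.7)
The plan is to prove item 1 (Lidskii's eigenvalue inequality) via the Wielandt--Ky Fan minimax principle, and then derive item 2 from item 1 by a convex-function argument.

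For item 1, my first step would be to reduce majorization $\lambda(C)-\lambda(D)\prec\lambda(C-D)$ to a family of inequalities: since $\tr(C)-\tr(D)=\tr(C-D)$ guarantees equality of traces (hence majorization follows once submajorization is in place), it suffices to show
\[
\sum_{j=1}^{k}\bigl(\lambda_{i_j}(C)-\lambda_{i_j}(D)\bigr)\ \leq\ \sum_{j=1}^{k}\lambda_{j}(C-D)
\]
for every $k\in\I_d$ and every choice of indices $1\le i_1<\cdots<i_k\le d$; taking $\{i_1,\dots,i_k\}$ to be the positions of the $k$ largest entries of $\lambda(C)-\lambda(D)$ would recover the submajorization statement. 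The reason for going through all index sets, rather than the top $k$, is that after permutation the differences $\lambda(C)-\lambda(D)$ are no longer decreasing; this is exactly the technical point that forces one to use Wielandt's form of the minimax principle instead of Ky Fan's.

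The key input would then be Wielandt's minimax principle: for $H\in\cH(d)$ and indices $i_1<\cdots<i_k$,
\[
\sum_{j=1}^{k}\lambda_{i_j}(H)\ =\ \max\bigl\{\,\tr(V^{*}HV)\ :\ V\in\cI(k,d),\ V^{*}Q_jV\ \text{has rank }\ge j\text{ for appropriate flags}\,\bigr\},
\]
where the constraint is phrased in terms of a flag of subspaces of dimensions $i_1,\dots,i_k$. Applying this to $D$, I would pick an isometry $V$ that realizes the maximum for $\sum_{j}\lambda_{i_j}(D)$, then use that same $V$ as a feasible test in the representation of $\sum_j\lambda_{i_j}(C)$ to get
\[
\sum_{j=1}^{k}\lambda_{i_j}(C)-\sum_{j=1}^{k}\lambda_{i_j}(D)\ \le\ \tr\bigl(V^{*}(C-D)V\bigr)\ \le\ \sum_{j=1}^{k}\lambda_{j}(C-D),
\]
where the last step is the Ky Fan maximum principle applied to $C-D$. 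This is the step I expect to be the main obstacle: getting the flag conditions consistent between the two applications of Wielandt requires care, and one has to check that the same $V$ can be tested against $C$ (this is where the choice of $i_j$'s, not just the top $k$, is essential).

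For item 2, once item 1 is established, I would invoke the standard fact that if $a\prec b$ in $\R^d$ and $\varphi:\R\to\R$ is convex then $\varphi(a)\prec_{w}\varphi(b)$ entry-wise. Applying this with $\varphi(t)=|t|$ to the majorization in item 1 yields
\[
|\lambda(C)-\lambda(D)|\ \prec_{w}\ |\lambda(C-D)|\ =\ s(C-D),
\]
where the last equality uses that $C-D\in\cH(d)$ so its singular values are the absolute values of its eigenvalues. This second part is essentially bookkeeping once the convex-majorization principle is in hand.
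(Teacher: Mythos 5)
The paper itself gives no proof of this theorem: it is quoted verbatim from Bhatia (Thm.\ III.4.1) and used as a black box. Your outline is essentially the proof in that reference --- Wielandt's minimax principle combined with Ky Fan's maximum principle for item 1, and the convexity transfer principle ($a\prec b$ implies $\varphi(a)\prec_w\varphi(b)$ for convex $\varphi$) applied to $\varphi(t)=|t|$ for item 2. The reduction to arbitrary index sets $i_1<\cdots<i_k$, your explanation of why Ky Fan alone does not suffice, and the derivation of item 2 (using that $s(C-D)=|\la(C-D)|\da$ for Hermitian $C-D$) are all correct.

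There is, however, one step that as written would fail. First, Wielandt's principle is a max--min, not a pure maximum of $\tr(V^*HV)$ over a constrained set of isometries:
$$
\sum_{j=1}^{k}\la_{i_j}(H)\;=\;\max_{\substack{\cM_1\subset\cdots\subset\cM_k\\ \dim\cM_j=i_j}}\ \min\Big\{\ \sum_{j=1}^{k}\langle x_j,Hx_j\rangle \ :\ x_j\in\cM_j\ \text{orthonormal}\ \Big\}\,.
$$
Second, and more importantly, you have the roles of $C$ and $D$ reversed: if you choose the test object optimal for $D$ and merely feasible for $C$, you get $\tr(V^*DV)=\sum_j\la_{i_j}(D)$ but only $\tr(V^*CV)\le\sum_j\la_{i_j}(C)$, which combine to a \emph{lower} bound on $\sum_j\la_{i_j}(C)-\sum_j\la_{i_j}(D)$, not the upper bound you need. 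The correct bookkeeping is: take the flag $\cM_1\subset\cdots\subset\cM_k$ optimal for $C$, and inside it choose orthonormal $x_1,\dots,x_k$ minimizing $\sum_j\langle x_j,Dx_j\rangle$. Then $\sum_j\la_{i_j}(C)\le\sum_j\langle x_j,Cx_j\rangle$ (the chosen system is feasible for the inner minimum defining $\sum_j\la_{i_j}(C)$), $\sum_j\langle x_j,Dx_j\rangle\le\sum_j\la_{i_j}(D)$ (an inner minimum is at most the max--min), and $\sum_j\langle x_j,(C-D)x_j\rangle\le\sum_{j=1}^{k}\la_j(C-D)$ by Ky Fan; adding these gives the desired inequality. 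With that correction your outline is the standard textbook proof.
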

 	%
 	
 	\pausa
 	In the next result we describe several elementary but useful properties of majorization 
 	and submajorization between real vectors.
 	
 	\begin{lem}\label{lem submaj props1}\rm 
 		Let $x\coma y\coma  z\coma w \in \R^k$. Then,
 		\ben
 		\item $x\da + y\ua\prec x+y\prec x\da+y\da$;
 		\item If $x\prec_w y$ and $y,\, z\in(\R^k)\da$ then $x+z\prec_w y+z$;
 		\item[3.] If $z\coma w\in (\R^k)\da$,  $x\prec z$ and $y\prec w$ then $ x+ y\prec z+w$. 
 		Moreover, if $x(\cdot),\,z(\cdot):[0,1]\rightarrow \R^k$ are continuous functions such that 
 		$x(t)\prec_w z(t)=(z(t))\da$ for $t\in [0,1]$, then 
 		$$
 		\int_0^1x(t)\ dt\prec_w \int_0^1z(t)\ dt\,.
 		$$
 		\een
 		If we assume further that $x\coma y\coma  z\in \R_{\geq 0}^k$ then,
 		\ben
 		\item[4.] $x\da\, y\ua\prec_w x\, y\prec_w x\da\, y\da$;
 		\item[5.] If $x\prec_w y$ and $y,\, z\in (\R_{\geq 0}^k)\da$ then $x\, z\prec_w y\, z$.
 		\QED
 		\een
 	\end{lem}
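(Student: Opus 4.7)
The plan is to deduce all five items from two standard tools: the sublinearity of the partial-sum functional $\sigma_r(w) \igdef \sum_{i=1}^r w\da_i$ on $\R^k$ (which is subadditive and positively homogeneous, and reduces to $\sum_{i=1}^r w_i$ when $w$ is non-increasing), together with the characterization $x\prec_w y \iff \sigma_r(x)\leq \sigma_r(y)$ for all $r\in\I_k$, with the trace equality $\sigma_k(x)=\sigma_k(y)$ added for majorization.

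Item 1 is the classical Hardy--Littlewood--P\'olya rearrangement theorem for sums. The right inequality $x+y\prec x\da+y\da$ follows immediately from subadditivity of $\sigma_r$ applied to the non-increasing vector $x\da+y\da$: one has $\sigma_r(x+y)\leq \sigma_r(x)+\sigma_r(y)=\sigma_r(x\da+y\da)$, with equality at $r=k$ from trace. The left inequality $x\da+y\ua\prec x+y$ I would prove by a $T$-transform (swap) argument: assuming without loss of generality that $x=x\da$, one passes from $x+y$ to $x+y\ua$ by a sequence of adjacent transpositions of entries of $y$, each of which replaces a pair $(x_i+y_i,\,x_j+y_j)$ with $i<j$ by the pair $(x_i+y_j,\,x_j+y_i)$ of equal sum but smaller maximum; each step shrinks the vector in the majorization order, and the composition yields the claim.

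Items 2 and 3 will follow from the same sublinearity-plus-hypothesis scheme. For item 2, $y+z$ is non-increasing, so $\sigma_r(y+z)=\sigma_r(y)+\sigma_r(z)$; combining with $x\prec_w y$ gives $\sigma_r(x+z)\leq \sigma_r(x)+\sigma_r(z)\leq \sigma_r(y)+\sigma_r(z)=\sigma_r(y+z)$. Item 3 is analogous: $z+w$ is non-increasing, hence $\sigma_r(x+y)\leq \sigma_r(x)+\sigma_r(y)\leq \sigma_r(z)+\sigma_r(w)=\sigma_r(z+w)$, and the trace identity $\tr(x+y)=\tr(z+w)$ upgrades this to majorization. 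The integral refinement is a direct consequence of the sublinearity (equivalently, convexity) of $\sigma_r$ via Jensen's inequality: $\sigma_r\bigl(\int_0^1 x(t)\,dt\bigr)\leq \int_0^1 \sigma_r(x(t))\,dt\leq \int_0^1 \sigma_r(z(t))\,dt = \sigma_r\bigl(\int_0^1 z(t)\,dt\bigr)$, where the last equality uses that the componentwise integral of a family of non-increasing vectors is non-increasing.

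Items 4 and 5 are the multiplicative analogues, proved by the same pattern. The only new ingredient is the Hardy--Littlewood partial rearrangement inequality $\sigma_r(xy)\leq \sum_{i=1}^r x\da_i\,y\da_i=\sigma_r(x\da y\da)$, valid for $x,y\in\R_{\geq 0}^k$ and established by the same $T$-transform argument adapted to products; together with the observation that $yz$ is non-increasing whenever $y,z$ are non-increasing and non-negative, items 4 and 5 reduce to short applications of this inequality plus the hypothesis. Non-negativity is essential throughout, both to make the rearrangement steps valid and to keep the intermediate vectors in $\R_{\geq 0}^k$. The most delicate step in the whole lemma is the $T$-transform argument for the lower bound in item 1 (and its multiplicative counterpart in item 4); everything else is a routine application of sublinearity plus hypothesis, modulo careful bookkeeping of sorted versus unsorted partial sums.
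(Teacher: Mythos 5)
Your proof is correct, but it is considerably more self-contained than what the paper actually does: the paper simply cites \cite{bhatia} for items 1, 2, 4, 5 and the first assertion of item 3, and only writes out a proof of the integral statement in item 3. For that statement the two arguments genuinely differ. The paper discretizes: it takes the regular partition $t_j=j/n$, applies the finite-sum version of item 3 to the Riemann sums $\sum_j x(t_j)\Delta_n\prec_w\sum_j z(t_j)\Delta_n$, and passes to the limit using continuity and the fact that $\prec_w$ is preserved under coordinatewise limits. You instead apply Jensen's inequality to the sublinear functionals $\sigma_r$, giving $\sigma_r\bigl(\int_0^1 x(t)\,dt\bigr)\le\int_0^1\sigma_r(x(t))\,dt\le\int_0^1\sigma_r(z(t))\,dt=\sigma_r\bigl(\int_0^1 z(t)\,dt\bigr)$; this is shorter, avoids the limit-interchange bookkeeping, and makes transparent exactly where the hypothesis $z(t)=(z(t))\da$ is used (to turn the last integral back into a partial sum of the integrated vector). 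Both are valid. One caveat on your sketch: item 5 does not follow from ``the rearrangement inequality plus the hypothesis'' alone; after bounding $\sigma_r(xz)\le\sum_{i=1}^r x\da_i z_i$ you still need an Abel-summation step, writing $\sum_{i=1}^r x\da_i z_i=\sum_{i=1}^{r-1}\sigma_i(x)(z_i-z_{i+1})+\sigma_r(x)z_r$ and using $z_i-z_{i+1}\ge 0$, $z_r\ge0$ together with $\sigma_i(x)\le\sigma_i(y)$, to reach $\sum_{i=1}^r y_i z_i=\sigma_r(yz)$. That is the one point where the ``sublinearity plus hypothesis'' scheme does not apply verbatim; with it spelled out, your argument is complete.
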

 	\begin{proof}
 		A proof of all these facts can be found in \cite{bhatia} with the exception of the second assertion in item 3, that we now prove. Indeed, for $n\in\N$ consider the regular partition $\{t_0=0<t_1<\ldots<t_n=1\}$ of $[0,1]$ so that $t_j=\frac{j}{n}$ for $ j\in\{0\}\cup\I_n$ and $\Delta_j=t_j-t_{j-1}=\frac{1}{n}=\Delta_n$, for $j\in\I_n$. By the first part of item 3. we have that, for $n\in \N$, 
 			$$
 			\sum_{j=1}^n x(t_j)\,\Delta_n\prec_w \sum_{j=1}^n z(t_j)\,\Delta_n\,.
 			$$
 			Since $x(\cdot),\,z(\cdot)$ are continuous functions, we have that 
 			$$
 			\int_0^1x(t)\ dt=\lim_n \sum_{j=1}^n x(t_j)\,\Delta_n\prec_w \lim_n \sum_{j=1}^n z(t_j)\,\Delta_n= \int_0^1z(t)\ dt\,,
 			$$ where we have used the following fact: 
 			if 
 			$(u_n)_{n\in\N}$ and $(v_n)_{n\in\N}\in\R^d$ are sequences that converge to $u$ and $v\in\R^d$ respectively, and such that $u_n\prec_w v_n$ for $n\in\N$, then $u\prec_w v$.
 	\end{proof}

 	\begin{rem}\label{remxleqy}\rm
 		Let $x,y\in \R^k$. If $x\leqp y$ then, 
 		$$
 		x^{\downarrow}\leqp y^{\downarrow}\  \text{ and } \ x\prec_w y \, .
 		$$
 		In particular, if $A,\,P\in\mat$ are such that $P$ is a projection then $s(PAP)\leqp s(A)$ so that $s(PAP)\prec_w S(A).$
 		\EOE\end{rem} 
 	
 	\begin{pro}\label{hat trick como en el futbol}\rm
 		Let $1\leq k<d$ and let $E\in \cM_{k,(d-k)}(\C)$. Then 
 		\begin{equation}
 			\hat{E}=\begin{pmatrix} 0&E\\ E^*&0
 			\end{pmatrix}\in \H(d) \py \la(\hat E)= (s(E),-s(E^*))\da\in(\R_{\geq 0}^d)\da\ .
 		\end{equation}
 		\QED
 	\end{pro}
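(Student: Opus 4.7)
The plan is to reduce $\hat E$ to a canonical skew block form by a block-diagonal unitary coming from the singular value decomposition of $E$, and then read off the eigenvalues of the resulting matrix by a permutation argument that breaks it into $2\times 2$ anti-diagonal blocks.

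First I would take the SVD $E = U_1\,\Sigma\,U_2^*$, with $U_1\in\cU(k)$, $U_2\in\cU(d-k)$ and $\Sigma\in\cM_{k\,,\,d-k}(\C)$ the (rectangular) diagonal matrix of singular values of $E$. Setting the block-diagonal unitary $W=\begin{pmatrix} U_1 & 0 \\ 0 & U_2 \end{pmatrix}\in\cU(d)$, a direct block computation gives
\begin{equation*}
W^*\,\hat E\,W \;=\; \begin{pmatrix} 0 & U_1^*\,E\,U_2 \\ U_2^*\,E^*\,U_1 & 0 \end{pmatrix} \;=\; \begin{pmatrix} 0 & \Sigma \\ \Sigma^* & 0 \end{pmatrix}\;\igdef\;M\,.
\end{equation*}
Since $W$ is unitary, $\la(\hat E)=\la(M)$, and $\hat E\in\cH(d)$ is automatic from the construction. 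It therefore suffices to identify $\la(M)$.

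Next I would read off $\la(M)$ by a permutation argument. Suppose first that $k\le d-k$; then $\Sigma=(D\ |\ 0)$ with $D=\text{diag}(s_1(E),\ldots,s_k(E))\in\cM_k(\C)^+$, and after a suitable permutation of the standard basis $M$ becomes block-diagonal with $k$ blocks of size $2$, of the form $\begin{pmatrix} 0 & s_j(E) \\ s_j(E) & 0 \end{pmatrix}$ (whose eigenvalues are $\pm s_j(E)$), together with a $(d-2k)\times(d-2k)$ zero block. Since in this case $s(E^*)\in\R^{d-k}$ consists of the entries of $s(E)$ padded with $d-2k$ zeros, the multiset of eigenvalues of $M$ is exactly $\{s_j(E)\}_{j\in\I_k}\cup\{-s_j(E^*)\}_{j\in\I_{d-k}}$, which rearranged in non-increasing order is $(s(E),-s(E^*))\da$. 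If instead $k>d-k$, an entirely symmetric argument (swapping the roles of $k$ and $d-k$) gives the same conclusion.

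The argument is essentially routine once the block SVD reduction is performed; the only mild bookkeeping issue is handling the rectangular case $k\ne d-k$, so that the zero padding in $\Sigma$ accounts correctly for the $|d-2k|$ zero eigenvalues of $\hat E$ and matches the lengths of the vectors $s(E)\in\R^k$ and $s(E^*)\in\R^{d-k}$ in the concatenation $(s(E),-s(E^*))$. No deeper obstacle is expected.
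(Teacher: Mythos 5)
Your argument is correct. Note that the paper states this proposition in its appendix as a standard fact and offers no proof of its own, so there is nothing to compare against; your reduction via the SVD to the anti-diagonal form $\begin{pmatrix} 0 & \Sigma \\ \Sigma^* & 0\end{pmatrix}$ followed by the permutation into $2\times 2$ blocks plus a zero block of size $|d-2k|$ is exactly the standard derivation, and your bookkeeping of the zero padding correctly reconciles the lengths of $s(E)$ and $s(E^*)$ with the $d$ eigenvalues. (Incidentally, the codomain $(\R_{\geq 0}^d)\da$ in the statement is a typo for $(\R^d)\da$, since the eigenvalues $-s_j(E^*)$ are nonpositive; your computation makes this evident.)
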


 	\subsection{Spectral spread of self-adjoint matrices}\label{spread sec}
 	
 	In this section we describe a Weyl's type inequality for the spectral spread.
 	Recall that given $A\in \cH(d)$ then the spectral spread of $A$ is 
 	$$
 	\sprm (A) =  \big(\spr_j(A)\, \big)_{j\in \I_h}
 	=\big(\la_j(A)-\la\ua_j(A)\, \big)_{j\in \I_h} \in (\R_+^h)\da \ .
 	$$ where $h = [\frac d 2]$ (integer part).

 	\begin{pro}\label{spreadskii aditivo}
 		Let $A,B\in \H(d)$. Then 
 		$$
 		\sprm(A+B)\prec\sprm(A)+\sprm(B) \,.$$
 	\end{pro}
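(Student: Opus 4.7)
The plan is to reduce the claim to two applications of Ky Fan's maximum principle, i.e.\ the subadditivity of $\Phi_r(C):=\sum_{j=1}^r \lambda_j(C)$ as a functional on $\cH(d)$, which is an immediate corollary of Lidskii's additive inequality in Theorem~\ref{teo ah}.

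First, I would apply $\Phi_r(A+B)\le\Phi_r(A)+\Phi_r(B)$ directly to $A,B$ to obtain
$$\sum_{j=1}^r \lambda_j(A+B)\le \sum_{j=1}^r \lambda_j(A)+\sum_{j=1}^r \lambda_j(B) \peso{for every} r\in\I_d.$$
Then I would apply the same principle to $-A,-B$, and use the identity $\lambda_j(-C)=-\lambda\ua_j(C)$ to flip the direction, obtaining the dual bound
$$\sum_{j=1}^r \lambda\ua_j(A+B)\ge \sum_{j=1}^r \lambda\ua_j(A)+\sum_{j=1}^r \lambda\ua_j(B) \peso{for every} r\in\I_d.$$

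Subtracting the second from the first for $r\in\I_h$ yields $\sum_{j=1}^r \spr_j(A+B)\le \sum_{j=1}^r \spr_j(A)+\sum_{j=1}^r \spr_j(B)$. Since $\spr_j(C)=\lambda_j(C)-\lambda\ua_j(C)$ is non-increasing in $j$ by construction, the vectors $\sprm(A+B)$ and $\sprm(A)+\sprm(B)$ both already lie in $(\R_{\ge 0}^h)\da$, so these partial-sum inequalities coincide with the partial sums of the non-increasing rearrangements, and they are precisely the submajorization relation needed in the rest of the paper (where the result feeds into item~3 of Lemma~\ref{lem submaj props1}).

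I do not foresee any serious obstacle. The only mildly delicate point is that genuine majorization with trace equality does not hold in general: for instance with $A=\text{diag}(1,0)$ and $B=\text{diag}(0,1)$ one has $\sprm(A+B)=(0)$ while $\sprm(A)+\sprm(B)=(2)$, so the trace of $\sprm$ is strictly subadditive. Accordingly, the ``$\prec$'' in the statement is to be interpreted as the submajorization ``$\prec_w$'' proved above, which is exactly what is used when the result is invoked in the proof of Theorem~\ref{teo principal1}.
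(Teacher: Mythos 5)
Your argument is correct and is essentially the paper's own proof: the paper invokes Lidskii's inequality $\la(A+B)\prec\la(A)+\la(B)$ together with its negated form and then adds the two relations via item 3 of Lemma \ref{lem submaj props1}, which amounts to exactly the two Ky Fan partial-sum inequalities that you subtract for $r\in\I_h$. Your observation that only $\prec_w$ (and not genuine majorization with trace equality) can hold is also accurate, and it matches what the paper's proof actually establishes and what is used later via Eq. \eqref{eq relac spe spre1}.
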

 	\proof
 	By Lidskii's additive inequality 
 	$$\la(A+B)\prec \la(A)+\la(B) \implies  
 	-\la(A+B)\prec -\la(A)-\la(B) \,.$$
 	By item 3 of Lemma \ref {lem submaj props1}, 
 	since $- \la\ua(A)-\la\ua(B)\in (\R^d)\da$, then 
 	$$
 	\barr{rl}
 	\la(A+B)-\la\ua(A+B)& \prec \la(A)+\la(B) - \la\ua(A)-\la\ua(B) \\&\\&
 	=(\la(A)- \la\ua(A))+(\la(B) -\la\ua(B))\in(\R^d)\da\,. \earr
 	$$ 
 	Let $h = [\frac d 2]$ (integer part). If $r\in\I_h$,
 	\begin{equation}
 		\barr{rcl}
 		\suml_{i=1}^r \sprm_i(A+B)&=&\suml_{i=1}^r \la_i(A+B)-\la_i\ua(A+B)
 		\\ &&\\
 		&\leq &
 		\suml_{i=1}^r (\la_i(A)- \la_i\ua(A))+(\la_i(B) -\la_i\ua(B))\\&&\\
 		&=&
 		\suml_{i=1}^r \sprm_i(A) +\sprm_i(B)\,. 
 		\earr
 	\end{equation}
 	\QED

 	{\scriptsize
 		}

 \end{document}